\newcommand*{\rom}[1]{\expandafter\@slowromancap\romannumeral #1@}
\newtheorem{theorem}{Theorem}[section]
\newtheorem{prop}[theorem]{Proposition}
\newtheorem{lemma}[theorem]{Lemma}
\newtheorem{defin}{Definition}[section]
\newtheorem{remark}{Remark}[section]
\title[Multiscaling in Wasserstein spaces]{Multiscaling in Wasserstein spaces}
\author[W. Mattar]{Wael Mattar}
\address[W. Mattar]{Tel Aviv, Israel}
\email{\tt waelmattar@tauex.tau.ac.il}
\author[N. Sharon]{Nir Sharon}
\address[N. Sharon]{Tel Aviv, Israel}
\email{\tt nsharon@tauex.tau.ac.il}
\keywords{Wasserstein; sequences of measures; multiscaling; refinement; neural networks.}
\subjclass[2020]{28A33; 43A32; 65C20; 65D17.}
\begin{document}

\maketitle

\section*{Abstract}
We present a novel multiscale framework for analyzing sequences of probability measures in Wasserstein spaces over Euclidean domains. Exploiting the intrinsic geometry of optimal transport, we construct a multiscale transform applicable to both absolutely continuous and discrete measures. Central to our approach is a refinement operator based on McCann’s interpolants, which preserves the geodesic structure of measure flows and serves as an upsampling mechanism. Building on this, we introduce the optimality number, a scalar that quantifies deviations of a sequence from Wasserstein geodesicity across scales, enabling the detection of irregular dynamics and anomalies. We establish key theoretical guarantees, including stability of the transform and geometric decay of coefficients, ensuring robustness and interpretability of the multiscale representation. Finally, we demonstrate the versatility of our methodology through numerical experiments: denoising and anomaly detection in Gaussian flows, analysis of point cloud dynamics under vector fields, and the multiscale characterization of neural network learning trajectories.

\section{Introduction}

The Wasserstein spaces of probability measures have emerged as fundamental objects in modern mathematics, bridging optimal transport, geometry, and functional analysis~\cite{ambrosio2008gradient, villani2021topics, santambrogio2015optimal, panaretos2020invitation}. Over the past two decades, the geometric structure of these spaces, particularly the formal Riemannian structure definition introduced by Otto~\cite{Otto31012001}, has enabled powerful tools for studying dynamics and evolution of probability measures. These advances have found growing relevance in fields ranging from image processing~\cite{rabin2011wasserstein} and deep learning~\cite{gulrajani2017improved} to data analysis~\cite{bigot2020statistical, peyre2019computational} and geophysics~\cite{yang2018application}. Moreover, Wasserstein spaces have proven valuable in cell biology~\cite{demetci2020gromov, klein2024genot, chen2025fast, banerjee2024efficient, schiebinger2019optimal}.

Modeling data as probability measures offers several advantages, particularly when the data possess geometric, spatial, or structural characteristics that traditional vector spaces fail to capture. Recent mathematical advancements in Wasserstein spaces have led to efficient computational algorithms, including manifold learning~\cite{hamm2025manifold}, regression~\cite{chen2023wasserstein}, and interpolation~\cite{chewi2021fast, fan2024conditional}. However, multiscale analysis within these spaces remains largely unexplored, leaving significant potential for both theoretical advances and practical applications.

Inspired by representing data on different scales, multiscale analysis has become ubiquitous in many data-driven tasks. These mathematical tools allow us to express sequences in a hierarchical structure, capturing features at various locations and scales. A classic example of multiscale analysis is the multiresolution framework introduced by wavelets~\cite{daubechies1992ten}. Subdivision schemes~\cite{dyn2002subdivision}, closely connected to wavelets, can likewise be used to achieve multiscale representations. In particular, refinement operators serve as upsampling operators, while downsampling operators perform the reverse operation, allowing transitions back and forth between scales~\cite{dyn2021linear, donoho1992interpolating}. Adaptations to Riemannian manifolds can be found in~\cite{wallner2020geometric, mattar2023pyramid, rahman2005multiscale}.

In this paper, we introduce a new multiscaling method suitable for analyzing sequences in Wasserstein spaces over Euclidean spaces. A multiscale representation of a sequence in a Wasserstein space is a pyramid consisting of a coarse approximation, in addition to a set of sequences of Borel measurable functions, which we call \emph{details}. The coarse approximation, together with the detail coefficients, can perfectly reconstruct the original sequences through the inverse multiscale transform. To this end, we adapt an elementary subdivision scheme to the metric spaces by exploiting McCann's interpolants~\cite{mccann1997convexity}. Our adaptation can be realized as a generalization to the transport subdivision schemes, recently introduced in~\cite{baccou2024subdivision}, because it is suitable not only for discrete measures, but also absolutely continuous ones. In addition to the refinement operator, we define two binary operators ``$\oplus$'' and ``$\ominus$'' that are analogous to scalar addition and subtraction, and play a fundamental role in multiscaling. Both operators utilize the theory of optimal transport, and we provide a detailed description of their computation.

Once the multiscale transform and its inverse are established, we introduce the \emph{optimality number} to quantify the deviation of a measure flow from optimality. This is a novel scalar that captures the extent to which the analyzed sequence deviates from geodesic flow in Wasserstein spaces. The optimality number accounts for not only global structure but also local geometric errors across scales, thereby providing a valuable tool for studying measure evolution in the Wasserstein space. Furthermore, this number can be redesigned to emphasize specific features of sequences, tailored to the requirements of the analysis task.

The proposed multiscaling framework can be applied to both absolutely continuous and discrete measures and can be readily adapted to sequences of mixed types. Our study also includes theoretical results. In particular, it turns out that the detail coefficients exhibit geometric decay across scales, provided that the analyzed sequence is sampled from an absolutely continuous curve in the Wasserstein space. We prove this theoretical result in addition to the stability of the inverse multiscale transform.

We conclude our paper with a section dedicated to numerical experiments and illustrations, complementing the theoretical results presented earlier. We analyze sequences of different types via our multiscaling method. In particular, we analyze a synthetically-generated sequence of Gaussian measures and demonstrate the application of \emph{denoising} and \emph{anomaly detection}. We further analyze the evolution of a point cloud via a vector field, using an example from electromagnetism. Lastly, we show how multiscaling can be used to investigate learning trajectories of neural network, thus opening new research directions in deep learning. All results are reproducible via a package of Python code available online at \href{https://github.com/WaelMattar/Measures}{https://github.com/WaelMattar/Measures}.

The paper is organized as follows. Section~\ref{sec:preliminaries} provides the necessary knowledge from the optimal transport theory. Section~\ref{sec:Multiscaling_continuous} introduces the elementary multiscale transform exclusively for sequences of absolutely continuous measures. Next, Section~\ref{sec:multiscaling_discrete} adapts the multiscale transform to sequences of discrete measures, and discusses all the required technical modifications. Theoretical results that are suitable for the two cases of sequences appear afterward in Section~\ref{sec:theoretical_results}. Finally, Section~\ref{sec:numerical_illustrations} concludes the paper with 3 numerical demonstrations showing different aspects of multiscaling, including useful applications in various settings.


\newpage

\section{Preliminaries}\label{sec:preliminaries}

We review Wasserstein spaces and some of their properties.

\subsection{Wasserstein spaces}~\label{subsec:Wasserstein_intro}

Let $d\in\mathbb{N}$ and denote by $\mathcal{P}(\mathbb{R}^d)$ the set of all probability measures associated with the Borel $\sigma$-algebra induced by the standard topology of $\mathbb{R}^d$. The main object of interest in this work is the Wasserstein subspace $\mathcal{P}_p(\mathbb{R}^d)$, where $p\geq 1$, defined by
\begin{equation}~\label{eqn:wasserstein_space}
    \mathcal{P}_p(\mathbb{R}^d) = \bigg\{ \mu\in\mathcal{P}(\mathbb{R}^d) \; \big| \;\int_{\mathbb{R}^d}\|x\|^p d\mu(x) < \infty \bigg\}.
\end{equation}
Endowed with the Wasserstein distance function, which we will define next, the space $\mathcal{P}_p(\mathbb{R}^d)$ becomes a metric space.

We calculate the distance between two elements in $\mathcal{P}_p(\mathbb{R}^d)$ via the Wasserstein distance which we borrow from the optimal transport framework~\cite{santambrogio2015optimal}. To this end, we first define the functional $\mathcal{J}_p$ acting on a probability measure $\gamma$ over the product space $\mathbb{R}^d\times\mathbb{R}^d$ by
\begin{equation}~\label{eqn:wasserstein_functional}
    \mathcal{J}_p(\gamma) = \int_{\mathbb{R}^d\times \mathbb{R}^d}\|x-y\|^p d\gamma(x,y).
\end{equation}
In the terminology of optimal transport, the integrand of $\mathcal{J}_p$ is called the \emph{cost function}. For any probability measures $\mu,\nu\in\mathcal{P}_p(\mathbb{R}^d)$, the Wasserstein distance is defined via
\begin{equation}~\label{eqn:Wasserstein_distance}
    W_p^p(\mu, \nu) = \min_{\gamma\in\Pi(\mu, \nu)}\mathcal{J}_p(\gamma),
\end{equation}
where $\Pi(\mu,\nu)$ is the set of all joint measures with marginals $\mu$ and $\nu$. Particularly,
\begin{equation}~\label{eqn:joint_measures}
    \Pi(\mu,\nu)=\{\gamma\in\mathcal{P}(\mathbb{R}^d\times\mathbb{R}^d)\;|\;\; (\pi^x)_\#\gamma=\mu,\;\;(\pi^y)_\#\gamma=\nu\},
\end{equation}
where $\pi^x$ and $\pi^y$ denote the projection maps $\mathbb{R}^d\times\mathbb{R}^d\to\mathbb{R}^d$ on the $x$ and $y$ coordinates, respectively, while $\#$ denotes the pushforward operation. The set $\Pi(\mu,\nu)$ is nonempty; it contains the product measure $\mu\times\nu$.

The right-hand side of~\eqref{eqn:Wasserstein_distance} is called the Kantorovich optimization problem, and an element in $\Pi(\mu,\nu)$ is called a \emph{transport plan}. Because the cost function of~\eqref{eqn:wasserstein_functional} is a convex function of the Euclidean difference $x-y$, then for any measures $\mu,\nu\in\mathcal{P}_p(\mathbb{R}^d)$ there exists an optimal transport plan solving~\eqref{eqn:Wasserstein_distance}.

In the special case where $\mu$ is absolutely continuous with respect to the Lebesgue measure, then the optimization problem admits a unique solution supported on the graph of a function $T:\mathbb{R}^d\rightarrow\mathbb{R}^d$ called the \emph{Monge} map. In other words, the unique solution takes the form $(I, T)_{\#}\mu$ where $I$ denotes the identity map. Furthermore, the image measure of $\mu$ via $T$ is $\nu$. That is, $T_{\#}\mu=\nu$. For convenience, we denote the Monge map transporting $\mu$ to $\nu$ in this case by $T_\mu^\nu$. We note that this uniqueness holds for strictly convex costs, i.e., for $p > 1$. For $p = 1$, the Monge map is not necessarily unique; however, the multiscale framework presented in this work remains valid in this setting, provided that a Monge map is selected consistently throughout the transform, for instance via entropic regularization~\cite{peyre2019computational} or any other selection rule applied uniformly.

For the quadratic cost case, where $p=2$, the Monge map $T$ becomes the gradient of a convex function $u:\mathbb{R}^d\to\mathbb{R}^d$ provided that $\mu$ is absolutely continuous. For more detailed results see~\cite{santambrogio2015optimal, ambrosio2008gradient}.

\subsection{The formal Riemannian structure of Wasserstein spaces}

The Wasserstein space $\mathcal{P}_p(\mathbb{R}^d)$ exhibits many properties that are similar to a Riemannian manifold~\cite{panaretos2020invitation}. This fact has been first realized by Otto~\cite{Otto31012001} through looking at the \emph{continuity equation} as a mean to endow the Wasserstein space with a Riemannian-like structure. We will visit the continuity equation in detail later. For now we focus on McCann's~\cite{mccann1997convexity} constant-speed geodesics and define tangent spaces to $\mathcal{P}_p(\mathbb{R}^d)$.

Let $\mu_0, \mu_1\in\mathcal{P}_p(\mathbb{R}^d)$ be probability measures, and let $\gamma\in\Pi(\mu_0, \mu_1)$ be an optimal transport plan. For $t\in[0,1]$ define the map $\pi^t:\mathbb{R}^d\times\mathbb{R}^d\rightarrow\mathbb{R}^d$ by
\begin{equation}
    \pi^t(x,y) = (1-t)x+ty, \quad x,y\in\mathbb{R}^d.
\end{equation}
Then the curve $\{\mu_t=(\pi^t)_{\#}\gamma\}_{t\in[0,1]}$, known as the McCann's interpolant, is a constant-speed geodesic in $\mathcal{P}_p(\mathbb{R}^d)$ that connects $\mu_0$ to $\mu_1$. In particular, the following equality holds
\begin{equation}~\label{eqn:constant_speed}
    W_p(\mu_t, \mu_s) = (t-s)W_p(\mu_0, \mu_1), \quad 0\leq s\leq t\leq 1.
\end{equation}
We treat the element $\mu_t$ falling on a McCann's interpolant as the \emph{weighted average} between $\mu_0$ and $\mu_1$, and define the averaging operator $\mathfrak{M}$ by
\begin{equation}~\label{eqn:McCann_average}
    \mathfrak{M}(\mu_0, \mu_1; t) = (\pi^t)_{\#}\gamma, \quad t\in[0, 1],
\end{equation}
which outputs a measure in $\mathcal{P}_p(\mathbb{R}^d)$. When $p>1$ and at least one of $\mu_0$ or $\mu_1$ is absolutely continuous, the optimal transport plan $\gamma$ is uniquely determined by the Monge map $T_{\mu_0}^{\mu_1}$, and consequently $\mathfrak{M}$ is unique. In the general case, when neither measure is absolutely continuous, or when $p=1$, the optimal transport plan $\gamma$, and hence $\mathfrak{M}$, may not be unique; in this case a consistent selection rule must be specified. We elaborate on this point in Section~\ref{sec:multiscaling_discrete} in the context of discrete measures, where we discuss how to apply $\mathfrak{M}$ consistently across repeated iterations. Explicit formulas for computing $\mathfrak{M}$ are given for the cases where $\mu_0$ and $\mu_1$ are both absolutely continuous or discrete, see~\eqref{eqn:continuous_McCann_average} and~\eqref{eqn:discrete_McCann_average}.

It is shown in~\cite[Proposition 5.32]{santambrogio2015optimal} and~\cite[Chapter 7]{ambrosio2008gradient} that McCann's curves are the only constant-speed geodesics in $\mathcal{P}_p(\mathbb{R}^d)$. As a direct implication of this definition, it is reasonable to define the \emph{tangent space} of the Wasserstein space $\mathcal{P}_p(\mathbb{R}^d)$ at a probability measure $\mu\in\mathcal{P}_p(\mathbb{R}^d)$, see~\cite[Definition 8.5.1]{ambrosio2008gradient}, as
\begin{equation}~\label{eqn:tangent_space}
    \text{Tan}_\mu= \text{cl}_{L^p(\mu)}\{ s(T-I) \; | \; T=T_{\mu}^\nu \text{ for some } \nu\in\mathcal{P}_p(\mathbb{R}^d),\; s>0\},
\end{equation}
where the closure is done in the $L^p(\mu)$ function space. To be precise, the space $L^p(\mu)$ is comprised of maps $f:\mathbb{R}^d\rightarrow\mathbb{R}^d$ such that $\|f\|^p$ is integrable with respect to the measure $\mu$, but we write $L^p(\mu)$ for convenience. The tangent space is valid and linear for any measure $\mu\in\mathcal{P}_p(\mathbb{R}^d)$. For more details and alternative definitions, we refer to~\cite[Chapter 8]{ambrosio2008gradient}.

Using the tangent space definition~\eqref{eqn:tangent_space} we can then define the \emph{exponential map} at $\mu$, which we denote by $\text{Exp}_\mu:\text{Tan}_\mu\rightarrow\mathcal{P}_p(\mathbb{R}^d)$, explicitly by the formula
\begin{equation}~\label{eqn:exp_map}
    \text{Exp}_\mu\big(s(T-I)\big) = \big( s(T-I) + I \big)_\# \mu.
\end{equation}
It is easy to see, considering Section~\ref{subsec:Wasserstein_intro}, that if $\mu$ is absolutely continuous, then $\text{Exp}_\mu$ becomes surjective on $\mathcal{P}_p(\mathbb{R}^d)$. Particularly, the inverse \emph{logarithm map} $\text{Log}_\mu:\mathcal{P}_p(\mathbb{R}^d)\rightarrow \text{Tan}_\mu$ takes the form
\begin{equation}~\label{eqn:log_map}
    \text{Log}_\mu(\nu) = T_{\mu}^{\nu} - I,
\end{equation}
where $T_{\mu}^{\nu}$ is the Monge map between $\mu$ and an arbitrary probability measure $\nu\in\mathcal{P}_p(\mathbb{R}^d)$.

Recall that under these circumstances, the measure $(I, T_{\mu}^{\nu})_\#\mu$ is the unique optimal transport plan minimizing $\mathcal{J}_p$ of~\eqref{eqn:wasserstein_functional} and hence $\|\text{Log}_\mu(\nu)\|^p_{L^p(\mu)}=W^p_p(\mu, \nu)$ where $W_p(\mu, \nu)$ is the Wasserstein distance~\eqref{eqn:Wasserstein_distance}. Overall, with these notations, we can write $\text{Exp}_\mu(\text{Log}_\mu(\nu)) = \nu$ for any $\nu\in\mathcal{P}_p(\mathbb{R}^d)$, and $\text{Log}_\mu(\text{Exp}_\mu(s(T-I)))=s(T-I)$ for any $s\in[0, 1]$ and map $T$. These notions will become essential in the following sections.

\subsection{The continuity equation}

The Wasserstein space $\mathcal{P}_p(\mathbb{R}^d)$ can be endowed with a differential structure consistent with the formal Riemannian structure discussed earlier through the continuity equation. Here we review the essential mathematical tools to describe and study flows in $\mathcal{P}_p(\mathbb{R}^d)$.

Let us recall the definition of the metric derivative. Given an absolutely continuous curve $\{\mu_t\}_{t\in[0, 1]} \subset \mathcal{P}_p(\mathbb{R}^d)$, the metric derivative is defined by
\begin{equation}~\label{eqn:metric_derivative}
    |\mu^\prime|_t = \lim_{h\rightarrow 0}\frac{W_p(\mu_{t+h}, \mu_t)}{h},
\end{equation}
provided this limit exists. We recall the fact that Lipschitz curves are absolutely continuous.

It is shown in~\cite{ambrosio2008gradient}, see the note~\cite{santambrogio2010introduction} for a quick overview, that for any absolutely continuous curve $\{\mu_t\}_{t\in[0, 1]}$ there exists a Borel vector field $v_t:\mathbb{R}^d\rightarrow\mathbb{R}^d$ depending on $t\in[0,1]$ such that the continuity equation
\begin{equation}~\label{eqn:continuity_equation}
    \frac{\partial}{\partial t} \mu_t + \nabla \cdot (\mu_t v_t) = 0,
\end{equation}
is satisfied in $[0,1]\times\mathbb{R}^d$, and that $\|v_t\|_{L^p(\mu_t)}\leq |\mu^\prime|_t$ for almost every $t\in[0, 1]$ in the Lebesgue measure. Conversely, if the curve $\mu_t$ solves the continuity equation~\eqref{eqn:continuity_equation} for some Borel vector field $v_t$ with $\int_0^1\|v_t\|_{L^p(\mu_t)}dt<\infty$, then $\mu_t$ is absolutely continuous and $\|v_t\|_{L^p(\mu_t)}\geq |\mu^\prime|_t$ for almost every $t\in[0,1]$, see, e.g.,~\cite[Theorem~5.14]{santambrogio2015optimal}. Among all vector fields that produce the same flow $\mu_t$, there is a unique optimal one with smallest $L^p(\mu_t)$ norm, equal to the metric derivative,
\begin{equation}~\label{eqn:metric_velocity_equality}
    \|v_t\|_{L^p(\mu_t)}=|\mu^\prime|_t
\end{equation}
almost everywhere in $t$ that is termed the ``tangent'' vector field.

Vector fields solving~\eqref{eqn:continuity_equation} for a curve of measures $\mu_t$ are sometimes called velocity fields. The reason being that, if particles are distributed with the law $\mu_0$ and conform at each time $t$ to the velocity field $v_t$, then the position of all particles at time $t$ must reconstruct $\mu_t$. Numerical illustrations of such dynamics appear in Figures~\ref{fig:electric_pyramid_1} and~\ref{fig:electric_pyramid_2}.

We conclude this subsection with an evaluation of vector fields in the case where the curve $\mu_t$ has discrete values of $t$. Suppose that $t$ belongs to the values of the dyadic grid $2^{-\ell}\mathbb{Z}\cap[0, 1]$ for some $\ell\in\mathbb{N}$. Then, the curve $\mu_t$ contains $2^\ell + 1$ measures parametrized over $t=\{i 2^{-\ell} | i=0,\dots, 2^\ell\}$. Focusing on a consecutive pair of measures $\mu_{i 2^{-\ell}}$ and $\mu_{(i+1) 2^{-\ell}}$ we can consider an optimal transport map $T_i$ such that $(I, T_i)_\#\mu_{i 2^{-\ell}}$ minimizes $\mathcal{J}_p$ of~\eqref{eqn:wasserstein_functional}, where the minima is exactly the Wasserstein distance $W^p_p(\mu_{i 2^{-\ell}}, \mu_{(i+1) 2^{-\ell}})$ of~\eqref{eqn:Wasserstein_distance}. Therefore, we can call the map $v_{i 2^{-\ell}}:\mathbb{R}^d\rightarrow\mathbb{R}^d$ given by $v_{i 2^{-\ell}}(x)=(T_i(x)-x)/2^{-\ell}$ the ``\emph{discrete velocity field}'' at time $t=i 2^{-\ell}$. Consequently,
\begin{equation}~\label{eqn:norm_discrete_tangent_vector}
    \|v_{i 2^{-\ell}}\|_{L^p(\mu_{i 2^{-\ell}})}=\frac{W_p(\mu_{i 2^{-\ell}}, \mu_{(i+1) 2^{-\ell}})}{2^{-\ell}}.
\end{equation}
Notice how the right hand side approaches the metric derivative~\eqref{eqn:metric_derivative} as $\ell\rightarrow\infty$. Overall, the map $v_{i 2^{-\ell}}$ can be realized as the discrete tangent vector field of the sequence $\mu_t$ at $t=i 2^{-\ell}$.

\section{Multiscaling absolutely continuous measures}\label{sec:Multiscaling_continuous}

In this section, we first introduce the necessary operators needed to construct our multiscale transform, acting on sequences of absolutely continuous measures in $\mathcal{P}_p(\mathbb{R}^d)$. Next, we define the multiscale transform and describe it in detail, and then present the optimality number as a tool to determine ``how optimal'' measures flow in the metric space.

All measures in this section, unless stated otherwise, are assumed to be absolutely continuous. In Section~\ref{sec:multiscaling_discrete} we describe how to adapt all the notions and definitions to the case of discrete measures.

\subsection{The binary operators of subtraction and addition}

We introduce the binary ``minus'' operator that acts on two measures $\mu,\nu\in\mathcal{P}_p(\mathbb{R}^d)$ by
\begin{equation}~\label{eqn:ominus}
    \nu \ominus \mu = T_\mu^\nu - I,
\end{equation}
where $I$ is the identity map of $\mathbb{R}^d$. The outcome of $\ominus$ defines a unique measurable map from $\mathbb{R}^d$ to itself, which is exactly the unique Monge map from $\mu$ to $\nu$ but with a translation of $I$. Moreover, because $T_\mu^\mu=I$ for any measure $\mu$, we have $\mu\ominus\mu=0$ the trivial zero map.

Notice that we can recover the probability measure $\nu$ if we have $\mu$ and the difference $\nu\ominus\mu$ at hand. To this end we define the ``plus'' operator by
\begin{equation}~\label{eqn:oplus}
    \mu \oplus \psi = (I + \psi)_\#\mu,
\end{equation}
for any probability measure $\mu\in\mathcal{P}_p(\mathbb{R}^d)$ and Borel measurable map $\psi:\mathbb{R}^d\to\mathbb{R}^d$. The operation $\oplus$ accepts probability measures in its first argument, measurable maps in its second argument, and returns measures that are necessarily probability measures. Both operators $\ominus$ and $\oplus$ are well defined under the absolute continuity assumption, and they are \emph{compatible} in the sense that
\begin{equation}~\label{eqn:operators_compatibility}
    \mu \oplus (\nu \ominus \mu) = \nu,
\end{equation}
for any probability measures $\mu,\nu\in\mathcal{P}_p(\mathbb{R}^d)$.

An immediate implication of~\eqref{eqn:ominus} is that the difference $\nu\ominus\mu$ lies in the tangent space $\text{Tan}_\mu$. In particular, take $s=1$ and the Monge map $T=T_\mu^\nu$, and substitute in the general term $s(T-I)$ appearing in~\eqref{eqn:tangent_space}. The result can be thought of as a tangent vector representing the map $T_\mu^\nu$ emanating from the point $\mu$, and hence the translation with the identity which corresponds to the origin of $\text{Tan}_\mu$. Likewise, the addition operation of~\eqref{eqn:oplus} can be seen as projecting the tangent vector $T_\mu^\nu-I\in\text{Tan}_\mu$ to $\mathcal{P}_p(\mathbb{R}^d)$ via the pushforward operation. In consistency with the Exp and Log operators of~\eqref{eqn:exp_map} and~\eqref{eqn:log_map} we can rewrite the subtraction and addition via
\begin{equation}~\label{eqn:operators_exp_log}
    \nu\ominus\mu = \text{Log}_\mu(\nu) \quad \text{and} \quad \mu\oplus \psi = \text{Exp}_\mu(\psi),
\end{equation}
for any two measures $\mu,\nu\in\mathcal{P}_p(\mathbb{R}^d)$ and a measurable map $\psi$.

Overall, for any two measures $\mu,\nu\in\mathcal{P}_p(\mathbb{R}^d)$ we get the following relation, which will later become useful for analysis.
\begin{equation}~\label{eqn:useful_relation}
    \mathcal{J}_p\big((I,T_{\mu}^\nu)_{\#}\mu\big) = \int_{\mathbb{R}^d}\|T_\mu^\nu(x) - x\|^p d\mu(x) = \|\nu \ominus \mu\|^p_{L^p(\mu)} = W_p^p(\mu, \nu),
\end{equation}
where $\mathcal{J}_p$ is the functional~\eqref{eqn:wasserstein_functional}, and $W_p$ is the Wasserstein distance~\eqref{eqn:Wasserstein_distance}. In the particular case where $\mu=\nu$ almost everywhere, then all the quantities in~\eqref{eqn:useful_relation} become $0$.

\subsection{Refinement operators}

We now exploit McCann's interpolants~\eqref{eqn:McCann_average} to introduce a new refinement operator acting on sequences in $\mathcal{P}_p(\mathbb{R}^d)$. A similar family called transport subdivision schemes was recently introduced in~\cite{baccou2024subdivision} exclusively for the discrete probability measures case and in~\cite{DynSharon2025} for complete metric spaces.

\begin{defin}~\label{defin:subdivision_scheme}
    The \emph{elementary} subdivision scheme $\mathcal{S}$ acting on a sequence of measures $\boldsymbol{\mu}=\{\mu_i\}_{i\in\mathbb{Z}}$ in $\mathcal{P}_p(\mathbb{R}^d)$ is defined by the rules
    \begin{equation}~\label{eqn:subdivision_scheme}
    \begin{cases}
        (\mathcal{S}\boldsymbol{\mu})_{2i} = \mu_i, \\
        (\mathcal{S}\boldsymbol{\mu})_{2i+1} = \mathfrak{M}(\mu_i, \mu_{i+1}; 1/2),
    \end{cases}
    \end{equation}
    for all $i\in\mathbb{Z}$, where $\mathfrak{M}$ is the averaging operator~\eqref{eqn:McCann_average}.
\end{defin}

Because all measures in this section are assumed to be absolutely continuous, we have a unique optimal Monge map $T_{\mu}^{\nu}$ transporting $\mu$ to $\nu$, hence the average appearing in~\eqref{eqn:subdivision_scheme} takes the form
\begin{equation}~\label{eqn:continuous_McCann_average}
    \mathfrak{M}(\mu, \nu; t) = \big(I+t(T_{\mu}^{\nu}-I)\big)_{\#}\mu, \quad t\in[0, 1].
\end{equation}
In other words, the weighted average in this case can be obtained by the classical linear interpolation between the identity $I$ and $T_{\mu}^{\nu}$.

We associate the resulted sequence $\{(\mathcal{S}\boldsymbol{\mu})_i\}$ with the half integers $i\in 2^{-1}\mathbb{Z}$. The subdivision scheme $\mathcal{S}$ is interpolating in the sense that it preserves the original measures $\{\mu_i\}_{i\in\mathbb{Z}}$. The main purpose of subdivision schemes is to produce continuous (preferably smooth) curves from a discrete set of data points~\cite{dyn2002subdivision}. We will show below that the elementary subdivision scheme introduced in Definition~\ref{defin:subdivision_scheme} yields continuous curves. However, more sophisticated rules can be designed to yield smooth curves in Wasserstein spaces, e.g., an adaptation of the non-interpolating B-spline subdivision schemes is achievable through iterative averaging~\cite{dyn2017manifold}. Furthermore, exploiting the Lane-Riesenfield algorithm~\cite{lane1980theoretical}, one can approximate the analogues of the B-spline subdivision schemes based on $\mathfrak{M}$. In general, advanced subdivision schemes can be derived via barycenters in the Wasserstein space~\cite{agueh2011barycenters}. A recent interpolation method for the Wasserstein space where $p=2$, based on the well-studied Euclidean B-splines, was proposed in~\cite{chewi2021fast}.

The following proposition shows that iterative refinement of absolutely continuous measures via $\mathcal{S}$ is consistent.

\begin{prop}~\label{prop:subdivision_closedness}
    Let $\mu_0$ be an absolutely continuous measure in $\mathcal{P}_p(\mathbb{R}^d)$, and let $\mu_1\in\mathcal{P}_p(\mathbb{R}^d)$ be an arbitrary measure. Denote by $\mu_{1/2}=\mathfrak{M}(\mu_0, \mu_1; 1/2)$ the midpoint between $\mu_0$ and $\mu_1$. Then
    \begin{equation*}
        \mathfrak{M}(\mu_0, \mu_1; 1/4) = \mathfrak{M}(\mu_0,\mu_{1/2}; 1/2),
    \end{equation*}
    and
    \begin{equation*}
        \mathfrak{M}(\mu_0, \mu_1; 3/4) = \mathfrak{M}(\mu_{1/2},\mu_1; 1/2).
    \end{equation*}
\end{prop}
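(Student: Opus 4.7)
The plan is to convert the identities into computations involving pushforward maps via formula~\eqref{eqn:continuous_McCann_average}, and then identify the relevant intermediate Monge map through a geodesic-reparametrization argument. Setting $T = T_{\mu_0}^{\mu_1}$, I would first record that $\mu_{1/2} = \bigl((1/2)I + (1/2)T\bigr)_\#\mu_0$ and $\mathfrak{M}(\mu_0,\mu_1;1/4) = \bigl((3/4)I + (1/4)T\bigr)_\#\mu_0$, so the first claim reduces to showing that $\mathfrak{M}(\mu_0,\mu_{1/2};1/2)$ equals the same pushforward.

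For the main step, I would consider the reparametrized curve $\tilde\mu_s := \mu_{s/2}$ for $s\in[0,1]$, where $\mu_t := \mathfrak{M}(\mu_0,\mu_1;t)$. Using the constant-speed property~\eqref{eqn:constant_speed} of McCann's interpolant, I get $W_p(\tilde\mu_s,\tilde\mu_r) = |s-r|\cdot(1/2)\,W_p(\mu_0,\mu_1) = |s-r|\,W_p(\mu_0,\mu_{1/2})$, so $\tilde\mu$ is a constant-speed geodesic between $\mu_0$ and $\mu_{1/2}$. Since every constant-speed geodesic in $\mathcal{P}_p(\mathbb{R}^d)$ is a McCann interpolant (as quoted from~\cite[Proposition 5.32]{santambrogio2015optimal}), and since absolute continuity of $\mu_0$ guarantees the McCann interpolant between $\mu_0$ and $\mu_{1/2}$ is unique, $\tilde\mu_s$ must coincide with $\mathfrak{M}(\mu_0,\mu_{1/2};s)$. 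Evaluating at $s=1/2$ yields $\mathfrak{M}(\mu_0,\mu_{1/2};1/2) = \mu_{1/4} = \mathfrak{M}(\mu_0,\mu_1;1/4)$.

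For the second equality, I would follow the same template with the reparametrization $\hat\mu_s := \mu_{(1+s)/2}$, which is analogously a constant-speed geodesic from $\mu_{1/2}$ to $\mu_1$. The invocation of uniqueness now requires that $\mu_{1/2}$ itself be absolutely continuous; this is where I would cite the standard regularity fact that the McCann interpolant of an absolutely continuous initial measure remains absolutely continuous at any interior time $t\in[0,1)$ (see~\cite[Chapter 7]{ambrosio2008gradient}). With this in hand, the same geodesic-uniqueness reasoning yields $\mathfrak{M}(\mu_{1/2},\mu_1;1/2) = \hat\mu_{1/2} = \mu_{3/4} = \mathfrak{M}(\mu_0,\mu_1;3/4)$.

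The main obstacle is the uniqueness-plus-absolute-continuity machinery that drives the identification of $\tilde\mu_s$ (and $\hat\mu_s$) with the respective McCann interpolant; once that is in place, the rest is just matching pushforwards. A secondary technical point is the absolute continuity of $\mu_{1/2}$ required for the second equality, which must be inherited from $\mu_0$ via the regularity of displacement interpolation rather than assumed on $\mu_1$.
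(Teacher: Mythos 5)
Your proof is correct, but it follows a genuinely different route from the paper's. The paper argues purely algebraically: writing $T=T_{\mu_0}^{\mu_1}$, it observes that the interpolation map $\tfrac12 I+\tfrac12 T$ pushing $\mu_0$ onto $\mu_{1/2}$ is itself the optimal Monge map $T_{\mu_0}^{\mu_{1/2}}$, and then the identity $\tfrac34 I+\tfrac14 T=\tfrac12 I+\tfrac12\bigl(\tfrac12 I+\tfrac12 T\bigr)$ immediately gives $\mathfrak{M}(\mu_0,\mu_1;1/4)=\mathfrak{M}(\mu_0,\mu_{1/2};1/2)$ via~\eqref{eqn:continuous_McCann_average}, with the second identity handled analogously. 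You instead reparametrize the interpolant, use the constant-speed property~\eqref{eqn:constant_speed} to see that $\tilde\mu_s=\mu_{s/2}$ is a constant-speed geodesic from $\mu_0$ to $\mu_{1/2}$, and then invoke the characterization of constant-speed geodesics as McCann interpolants together with uniqueness of the optimal plan from an absolutely continuous source. What each approach buys: the paper's computation is shorter and more elementary, but it silently relies on the standard fact that the restricted plan is optimal (i.e., that $\tfrac12 I+\tfrac12 T$ really is the Monge map to the midpoint); your argument avoids asserting that optimality directly, at the price of heavier machinery (the geodesic characterization quoted from~\cite{santambrogio2015optimal, ambrosio2008gradient}, plus the interior-time absolute-continuity of displacement interpolants). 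Notably, your handling of the second identity is more careful than the paper's ``in a similar manner'': you correctly identify that uniqueness there must come from the absolute continuity of $\mu_{1/2}$, inherited from $\mu_0$, rather than from any assumption on $\mu_1$.
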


\begin{proof}
    Since $\mu_0$ is assumed to be absolutely continuous, then there exists a unique Monge map $T_{\mu_0}^{\mu_1}$ pushing $\mu_0$ onto $\mu_1$. The midpoint between these measures is hence given by~\eqref{eqn:continuous_McCann_average} as
    \begin{equation*}
        \mu_{1/2} = \big(\frac{1}{2}I+\frac{1}{2}T_{\mu_0}^{\mu_1}\big)_{\#}\mu_0.
    \end{equation*}
    Here, the map $\frac{1}{2}I+\frac{1}{2}T_{\mu_0}^{\mu_1}$ pushing $\mu_0$ onto $\mu_{1/2}$, which we denote by $T_{\mu_0}^{\mu_{1/2}}$, is not a mere map, but the optimal transport. Consequently, it is algebraically evident that
    \begin{align*}
        \mathfrak{M}(\mu_0, \mu_1; 1/4) & = \big(\frac{3}{4}I+\frac{1}{4}T_{\mu_0}^{\mu_1}\big)_{\#}\mu_0 = \bigg(\frac{1}{2}I+\frac{1}{2}\big(\frac{1}{2}I+\frac{1}{2}T_{\mu_0}^{\mu_1}\big)\bigg)_{\#}\mu_0
        \\ & = \big(\frac{1}{2}I+\frac{1}{2}T_{\mu_0}^{\mu_{1/2}}\big)_{\#}\mu_0=\mathfrak{M}(\mu_0,\mu_{1/2}; 1/2).
    \end{align*}
    The second equality can be shown in a similar manner.
\end{proof}

Proposition~\ref{prop:subdivision_closedness} leads us to the following remark.

\begin{remark}
    One can define the operator $\mathcal{S}^r$, $r\in\mathbb{N}$ as the decomposition of $\mathcal{S}$ on itself $r$-many times. Furthermore, following Proposition~\ref{prop:subdivision_closedness}, we have that all the measures of the refined sequence $\{(\mathcal{S}^r\boldsymbol{\mu})_i\}$, associated with the indices $i\in 2^{-r}\mathbb{Z}$, fall on the piecewise geodesic interpolant
    \begin{equation*}
        \mu_s = \mathfrak{M}(\mu_{[s]}, \mu_{[s]+1}; \{s\}), \quad s\in\mathbb{R},
    \end{equation*}
    where $[\cdot]$ and $\{\cdot\}$ are the floor and fractional part functions, respectively. More on the convergence analysis of such scheme, see~\cite{DynSharon2025}.
\end{remark}

Although the subdivision scheme~\eqref{eqn:subdivision_scheme} may deserve a separate study on its own, including its convergence analysis, we focus on its use in multiscale transforms, as we will see next.

\subsection{Elementary multiscale transform}

Multiscale transforms usually involve refinement operators as tools to predict missing data. In our framework, we use the elementary subdivision scheme $\mathcal{S}$ of~\eqref{eqn:subdivision_scheme} to refine sequences of measures in $\mathcal{P}_p(\mathbb{R}^d)$. Here we introduce the elementary multiscaling transform of sequences of measures, and then define the \emph{optimality number}.

Let $\boldsymbol{\mu}^{(J)}=\{\mu^{(J)}_i\}_{i\in 2^{-J}\mathbb{Z}}$ be a sequence in $\mathcal{P}_p(\mathbb{R}^d)$. The \emph{elementary multiscale analysis} is defined by the following iterations
\begin{equation}~\label{eqn:elementary_multiscaling}
    \boldsymbol{\mu}^{(\ell-1)}=\mathcal{D}\boldsymbol{\mu}^{(\ell)}, \quad \boldsymbol{\psi}^{(\ell)}= \boldsymbol{\mu}^{(\ell)} \ominus \mathcal{S}\boldsymbol{\mu}^{(\ell-1)}, \quad \ell=1,\dots, J,
\end{equation}
where $\mathcal{D}$ is the elementary downsampling operator given by $(\mathcal{D}\boldsymbol{\mu}^{(\ell)})_i=\mu^{(\ell)}_{2i}$ for any $i\in\mathbb{Z}$, while the difference operator $\ominus$ of~\eqref{eqn:ominus} is applied element-wise.

The analysis of the sequence $\boldsymbol{\mu}^{(J)}$ yields a pyramid $\{\boldsymbol{\mu}^{(0)}; \boldsymbol{\psi}^{(1)},\dots,\boldsymbol{\psi}^{(J)}\}$ that forms a representation to $\boldsymbol{\mu}^{(J)}$ on different scales. In particular, on the lowest scale we have a coarse approximation of measures, $\boldsymbol{\mu}^{(0)}\subset\mathcal{P}_p(\mathbb{R}^d)$, and $\boldsymbol{\psi}^{(\ell)}$, $\ell=1,\dots, J$ are the \emph{detail coefficients} of the analysis. Each sequence $\boldsymbol{\psi}^{(\ell)}$ encodes the measurable optimal transport maps between the elements of $\boldsymbol{\mu}^{(\ell)}$ and the predicted measures of the previous scale $\mathcal{S}\boldsymbol{\mu}^{(\ell-1)}$.

The pyramid of analysis can be synthesized back into $\boldsymbol{\mu}^{(J)}$ by iterating the addition operator~\eqref{eqn:oplus} as follows
\begin{equation}~\label{eqn:elementary_synthesis}
    \boldsymbol{\mu}^{(\ell)} = \mathcal{S}\boldsymbol{\mu}^{(\ell-1)}\oplus \boldsymbol{\psi}^{(\ell)}, \quad \ell=1,\dots, J.
\end{equation}
These iterations are called the \emph{inverse multiscale transform}, and they perfectly reconstruct $\boldsymbol{\mu}^{(J)}$ due to the compatibility condition~\eqref{eqn:operators_compatibility}.

Because the subdivision scheme $\mathcal{S}$ is interpolating, the detail coefficients generated by~\eqref{eqn:elementary_multiscaling} at all levels $\ell=1,\dots,J$, and all even indices $2i$, $i\in\mathbb{Z}$ must coincide with the trivial zero map. That is $\psi^{(\ell)}_{2i}=0$. Therefore, half of each layer of details $\boldsymbol{\psi}^{(\ell)}$ can be omitted when storing the pyramid representation. Overall, the number of nontrivial objects in the multiscale representation of $\boldsymbol{\mu}^{(J)}$ is equal to the number of measures in $\boldsymbol{\mu}^{(J)}$, and hence the multiscale transform~\eqref{eqn:elementary_multiscaling} can be used in practice for data compression. However, this is correct only when the operation $\ominus$ of~\eqref{eqn:ominus} requires no additional storage. For instance, when $\boldsymbol{\mu}^{(J)}$ is sampled from a family of distributions modeled with a fixed number of parameters, and the Monge maps between any two elements is guaranteed to have no greater number of parameters.

We briefly discuss the computational complexity of the elementary multiscale transform~\eqref{eqn:elementary_multiscaling}. Suppose the analyzed sequence $\boldsymbol{\mu}^{(J)}$ consists of $2^n$ measures, and that the transform is applied for $J \leq n$ levels. At each level $\ell = 1, \ldots, J$, the transform requires computing the detail coefficients $\boldsymbol{\psi}^{(\ell)}$, which amounts to evaluating the $\ominus$ operation between $\boldsymbol{\mu}^{(\ell)}$, with $2^{n+\ell-J}$ measures, and the Monge maps between consecutive measures in $\boldsymbol{\mu}^{(\ell-1)}$ as their predicted counterparts. Since $\ominus$ itself requires solving an optimal transport problem, we overall need to solve no greater than $2^{n+\ell-J}$ problems for each level $\ell$. Here, the computations of the even-indexed detail coefficients were skipped because they are the trivial zero map. Therefore, the total number of optimal transport problems solved across all $J$ levels is $\sum_{\ell=1}^{J} 2^{n-\ell+J} = 2^{n}(2^J-1)$. In practice, and in line with other multiscaling techniques, $J$ is typically small and does not exceed $6$, so the total number of optimal transport solves remains a moderate multiple of the sequence length $2^n$. In the absolutely continuous setting, each optimal transport problem reduces to solving a Monge--Amp\`ere equation, for which efficient numerical solvers are available, including entropic regularization approaches~\cite{peyre2019computational}. Overall, the transform is computationally tractable for sequences of moderate lengths, and its cost scales linearly with the sequence length, making it a practical tool for real-world applications.

Similar multiscale transforms can be established using different interpolating refinements by following the same formula of decompositions~\eqref{eqn:elementary_multiscaling}. For non-interpolating refinements however, the downsampling operator $\mathcal{D}$ needs to be modified in such a way to guarantee the property $\psi^{(\ell)}_{2i}=0$ for all $\ell=1,\dots,J$ and $i\in\mathbb{Z}$. In particular, it was shown in~\cite{dyn2021linear} that $\mathcal{D}\boldsymbol{\mu}^{(\ell)}$ must involve global averaging of the even elements of $\boldsymbol{\mu}^{(\ell)}$. Nevertheless, it was proven later in~\cite{mattar2023pyramid} that $\mathcal{D}$ can be approximated with local averaging at the expense of a controllable error manifested in $\psi^{(\ell)}_{2i}$.

We now define two norms that act on sequences of maps. Let $\boldsymbol{\mu}=\{\mu_i\}_{i\in\mathbb{Z}}\in\mathcal{P}_p(\mathbb{R}^d)$, and let $\boldsymbol{\psi}=\{\psi_i\}_{i\in\mathbb{Z}}$ be a corresponding sequence of measurable maps from $\mathbb{R}^d$ to itself. Then we define
\begin{equation}~\label{eqn:useful_norms}
    \|\boldsymbol{\psi}\|_{1} = \sum_{i\in\mathbb{Z}} \|\psi_i\|_{L^p(\mu_i)} \quad \text{and} \quad \|\boldsymbol{\psi}\|_{\infty} = \sup_{i\in\mathbb{Z}} \|\psi_i\|_{L^p(\mu_i)},
\end{equation}
where the norm $\|\cdot\|_{L^p(\mu_i)}$ is calculated as appears in~\eqref{eqn:useful_relation}. Although these norms exclude $\boldsymbol{\mu}$ from their notation, the sequence $\boldsymbol{\mu}$ can always be understood from the context.

Let us now introduce the optimality number. To determine how optimal sequences vary in $\mathcal{P}_p(\mathbb{R}^d)$, we treat the discrepancy between a general term $\mu_i^{(\ell)}$ and its predicted counterpart $(\mathcal{S}\boldsymbol{\mu}^{(\ell-1)})_{i}$ as an error. The significance of this error is calculated via the Wasserstein metric~\eqref{eqn:Wasserstein_distance}.

\begin{defin}~\label{defin:optimality_number}
    The optimality number $\omega$ of a sequence $\boldsymbol{\mu}^{(J)}\subset\mathcal{P}_p(\mathbb{R}^d)$ is defined by
    \begin{equation}~\label{eqn:optimality_number}
        \omega(\boldsymbol{\mu}^{(J)}) = \sum_{\ell=1}^J \|\boldsymbol{\psi}^{(\ell)}\|_{1},
    \end{equation}
    where $\boldsymbol{\psi}^{(\ell)}$, $\ell=1,\dots, J$ are the detail coefficients generated by the multiscale transform~\eqref{eqn:elementary_multiscaling}.
\end{defin}

The lower the value $\omega(\boldsymbol{\mu}^{(J)})$, the more optimal the flow of $\boldsymbol{\mu}^{(J)}$. Constant sequences and measures sampled along constant-speed geodesics have $0$ optimality, which is the best optimality number. In contrast, a sequence connecting two measures through a curve that deviates from their constant-speed geodesic would have a positive optimality number. If the deviation increases, then so does the optimality number. Overall, the value $\omega(\boldsymbol{\mu}^{(J)})$ can be used as a tool to indicate how optimal the sequence $\boldsymbol{\mu}^{(J)}$ flows in the Wasserstein spaces.

Observe that in Definition~\ref{defin:optimality_number}, the optimality number is defined in terms of the parameter $J$ associated with the analyzed sequence. In practice, this number is typically no greater than $6$. Moreover, the optimality number can be redefined to incorporate fewer levels of multiscale decompositions, depending on the desired coarse-scale approximation.

The benefit of calculating the optimality number~\eqref{eqn:optimality_number} via the multiscale analysis~\eqref{eqn:elementary_multiscaling} is that the detail coefficients describe the optimality errors on different scales and locations. The multiscale representation gives a clear image of both local and global errors. Moreover, if a sequence of measures is expected to evolve naturally, from an initial state to a final state, for instance, according to the optimal transport theory, then the pyramid transform applied to the observed sequence can reveal errors across scales and locations.

A drawback of defining $\omega$ as in~\eqref{eqn:optimality_number} is that detail coefficients associated with even indices do not contribute to $\omega$. This is due to the fact that $\|\psi^{(\ell)}_{2i}\|_{L^p(\mu^{(\ell)}_{2i})}=0$ for all $\ell=1,\dots,J$ and $i\in\mathbb{Z}$. However, this problem can be solved, for instance, by shifting the analyzed sequence $\boldsymbol{\mu}^{(J)}$ with one index to the left or right, compute the optimality as in~\eqref{eqn:optimality_number}, and then average with the original value $\omega(\boldsymbol{\mu}^{(J)})$.

Furthermore, the optimality number can be adjusted to reveal more information. For instance, one can penalize the $\ell$th layer of coefficients, $\|\boldsymbol{\psi}^{(\ell)}\|_1$, and multiply it with a factor, say $2^\ell$, to give more emphasis on changes that occur on high scales. Alternatively, the penalty could be applied more heavily to specific, predetermined regions over time. In short, the optimality number can be redesigned to capture valuable problem-specific information.

\section{Multiscaling discrete measures}\label{sec:multiscaling_discrete}

Here we treat the case where the sequences of interest consist of discrete measures. In particular, we revisit Section~\ref{sec:Multiscaling_continuous} and present the suitable modifications needed to adapt~\eqref{eqn:elementary_multiscaling} to the discrete case. The main differences lie in the averaging operator $\mathfrak{M}$ of~\eqref{eqn:McCann_average}, as well as the operations $\ominus$ and $\oplus$ of~\eqref{eqn:ominus} and~\eqref{eqn:oplus}, respectively.

Let $\nu,\mu\in\mathcal{P}_p(\mathbb{R}^d)$ be two discrete probability measures. Then, there exist $m,n\in\mathbb{N}$, and $m+n$ points $x^\mu_1,\dots, x^\mu_m, x^\nu_1, \dots, x^\nu_n\in\mathbb{R}^d$ such that
\begin{equation}~\label{eqn:arbitrary_discrete_measures}
    \mu = \sum_{i=1}^m p^\mu_{i}\delta_{x^\mu_i} \quad \text{and} \quad \nu=\sum_{j=1}^n p^\nu_j\delta_{x^\nu_j},
\end{equation}
where $\sum_{i=1}^m p^\mu_i=1$, $\sum_{j=1}^np^\nu_j=1$ and $p^\mu_i,p^\nu_j\geq0$. The measure $\delta$ here is Dirac's measure. Specifically, for any Borel set $\mathcal{A}\subseteq \mathbb{R}^d$ and a point $x\in\mathbb{R}^d$, we have $\delta_{x}(\mathcal{A})=1$ if $x\in\mathcal{A}$ and $\delta_{x}(\mathcal{A})=0$ otherwise.

The Kantorovich optimization problem~\eqref{eqn:Wasserstein_distance} reduces to solving the following linear program
\begin{equation}~\label{eqn:discrete_Kantorovich}
    \min_{\lambda_{i, j}} \;\sum_{i=1}^m\sum_{j=1}^n\big\|x^\mu_i-x^\nu_j\big\|^p\lambda_{i, j} \quad \text{subject to} \quad \sum_{i=1}^m\lambda_{i, j}=p^\nu_j, \quad \sum_{j=1}^n\lambda_{i, j}=p^\mu_i.
\end{equation}
The matrix solution $\Lambda_{\mu}^\nu=[\lambda_{i,j}]_{1\leq i\leq m, 1\leq j\leq n}$ is called the coupling matrix, where its entry $\lambda_{i, j}$ represents the amount of mass moving from the point $x^\mu_i$ to the point $x^\nu_j$. As stated in Section~\ref{sec:preliminaries}, because $p\geq 1$, there exists a solution to the problem. For the computational aspects of solving~\eqref{eqn:discrete_Kantorovich} we refer to~\cite{peyre2019computational}.

In the recent study~\cite{baccou2024subdivision}, the authors use the coupling matrix $\Lambda_\mu^\nu$ as a medium to construct an averaging operator, similar to our interpretation of McCann's average~\eqref{eqn:McCann_average}. As a result, this coupling-based operator is used to construct refinement rules similar to~\eqref{eqn:subdivision_scheme}. Here we follow the same methodology. Namely, the weighted average of the discrete measures~\eqref{eqn:arbitrary_discrete_measures} is given by
\begin{equation}~\label{eqn:discrete_McCann_average}
    \mathfrak{M}(\mu, \nu; t) = \sum_{i=1}^m\sum_{j=1}^n\lambda_{i,j}\delta_{x^t_{L(i, j)}},
\end{equation}
where $\lambda_{i,j}$ is the mass displaced from $x^\mu_i$ to $x^\nu_j$ via a coupling matrix, while the point $x^t_{L(i, j)}$ falls on the line segment connecting $x^\mu_i$ to $x^\nu_j$ with weight $t$. In particular, $x^t_{L(i,j)} = (1-t)x^\mu_i + tx^\nu_j$. It is intuitive to see that $\mathfrak{M}(\mu,\nu;0)=\mu$ and $\mathfrak{M}(\mu,\nu;1)=\nu$ due to the constrains of the Kantorovich problem~\eqref{eqn:discrete_Kantorovich}.

Equation~\eqref{eqn:discrete_McCann_average} can be realized as the analog of~\eqref{eqn:continuous_McCann_average} for the discrete measure case. Consequently, by using this explicit form of $\mathfrak{M}$, one can naturally obtain an analog version of the elementary subdivision scheme~\eqref{eqn:subdivision_scheme} that is suitable for refining sequences of discrete measures. Moreover, it was shown in~\cite{baccou2024subdivision} that the adaptation of the celebrated interpolating 4-point scheme~\cite{dyn1992subdivision}, and the non-interpolating corner-cutting scheme, via the averaging operator~\eqref{eqn:discrete_McCann_average}, are convergent.

For any $p\geq 1$, the coupling matrix $\Lambda_\mu^\nu$ solving~\eqref{eqn:discrete_Kantorovich} may not be unique, and hence the averaging operator $\mathfrak{M}$ of~\eqref{eqn:discrete_McCann_average} may not be unique either. When the subdivision scheme $\mathcal{S}$ is applied repeatedly, consistency across iterations must be ensured. Specifically, for any pair of consecutive measures $\mu_i^{(\ell)}$ and $\mu_{i+1}^{(\ell)}$, all intermediate averages produced at finer scales must lie on the same McCann's interpolant, i.e., the same constant-speed geodesic connecting $\mu_i^{(\ell)}$ to $\mu_{i+1}^{(\ell)}$. This is guaranteed by fixing a coupling matrix for each consecutive pair prior to the refinement process, and using it consistently across all subsequent iterations.

In practice, non-uniqueness arises only when the cost matrix exhibits a degenerate structure (e.g., exact symmetries or repeated costs), a situation that is unlikely to occur in real-world data due to noise and variability. However, uniqueness can always be enforced via entropic regularization~\cite{peyre2019computational}.

Moving forward, we now present the analogs of the operators $\ominus$ and $\oplus$ of~\eqref{eqn:ominus} and~\eqref{eqn:oplus} and how to compute them in practice. Let $\mu$ and $\nu$ be two discrete measures given by~\eqref{eqn:arbitrary_discrete_measures} and $\Lambda_{\mu}^{\nu}=[\lambda_{i,j}]_{1\leq i\leq m, 1\leq j\leq n}$ be a coupling matrix solving~\eqref{eqn:discrete_Kantorovich}. The difference operator is defined via
\begin{equation}~\label{eqn:discrete_ominus}
    \nu\ominus\mu=\bigg(\big[x^\nu_j-x^\mu_i\big]_{i=1,\dots,m, \;j=1,\dots,n}, \;\Lambda_\mu^\nu\bigg).
\end{equation}
The first argument of $\nu\ominus\mu$ is a tensor of order $3$ with $m$ rows, $n$ columns and $d$ slices, corresponding to the number of atoms of $\mu$ and $\nu$, and the coordinates of $\mathbb{R}^d$ respectively. The second argument encodes the coupling matrix between $\mu$ and $\nu$. In particular, the first argument can be geometrically described as all the vectors emanating from the points of $\mu$ to the points of $\nu$, along which a mass can be transported. The $m\times n$ coupling matrix $\Lambda_\mu^\nu$ is stored in the difference $\nu\ominus\mu$ for the purpose of perfectly reconstructing $\nu$ from $\mu$ and $\nu\ominus\mu$.

Conversely, let $\psi=(x^\psi,\Lambda^\psi)$ be a tuple consisting of a tensor $x^\psi$ of order $m\times k\times d$ for some $k\in\mathbb{N}$, and a matrix $\Lambda^\psi=[\lambda^\psi_{i,j}]$ of order $m\times k$ with nonnegative entries. We define $\oplus$ via
\begin{equation}~\label{eqn:discrete_oplus}
    \mu \oplus \psi = \sum_{i=1}^m\sum_{j=1}^k\lambda^\psi_{i, j}\delta_{x^\mu_i+x^\psi_{i,j}}.
\end{equation}
Put in simple words, the $\oplus$ operator distributes, or perhaps splits, the masses of its first argument according to the transport plan provided by its second one. We illustrate the computations of the difference between discrete measures in the following figure.
\begin{figure}[htbp]
  \centering
  \begin{subfigure}[b]{0.32\textwidth}
    \centering
    \includegraphics[width=\linewidth]{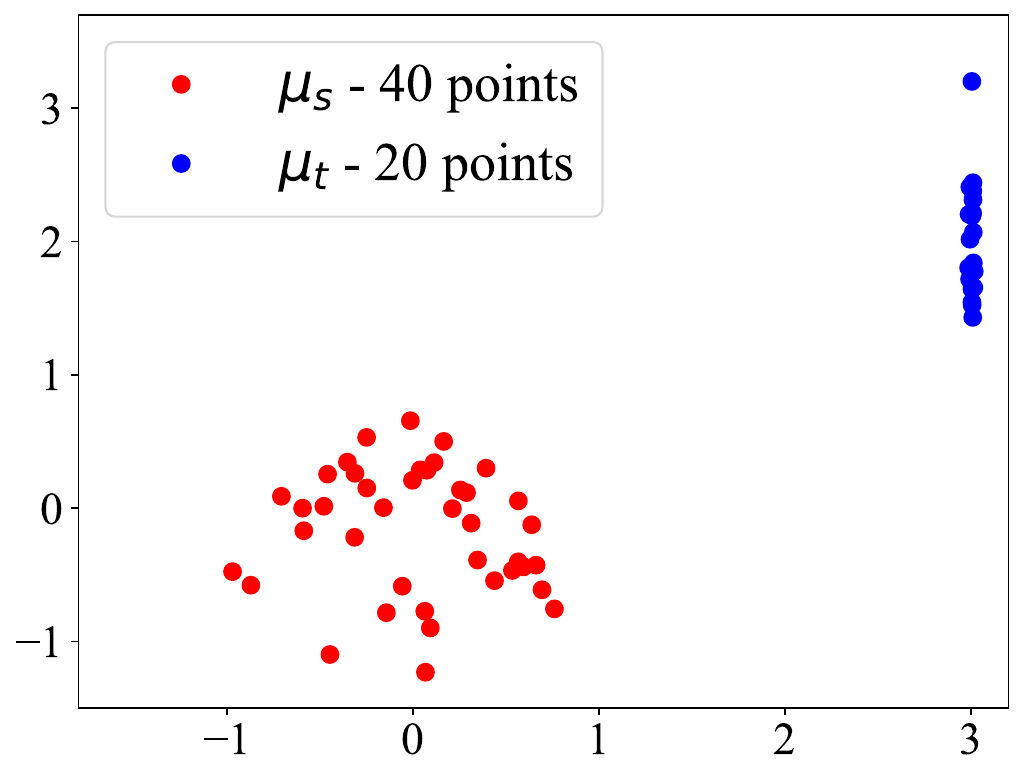}
  \end{subfigure}\hfill
  \begin{subfigure}[b]{0.32\textwidth}
    \centering
    \includegraphics[width=\linewidth]{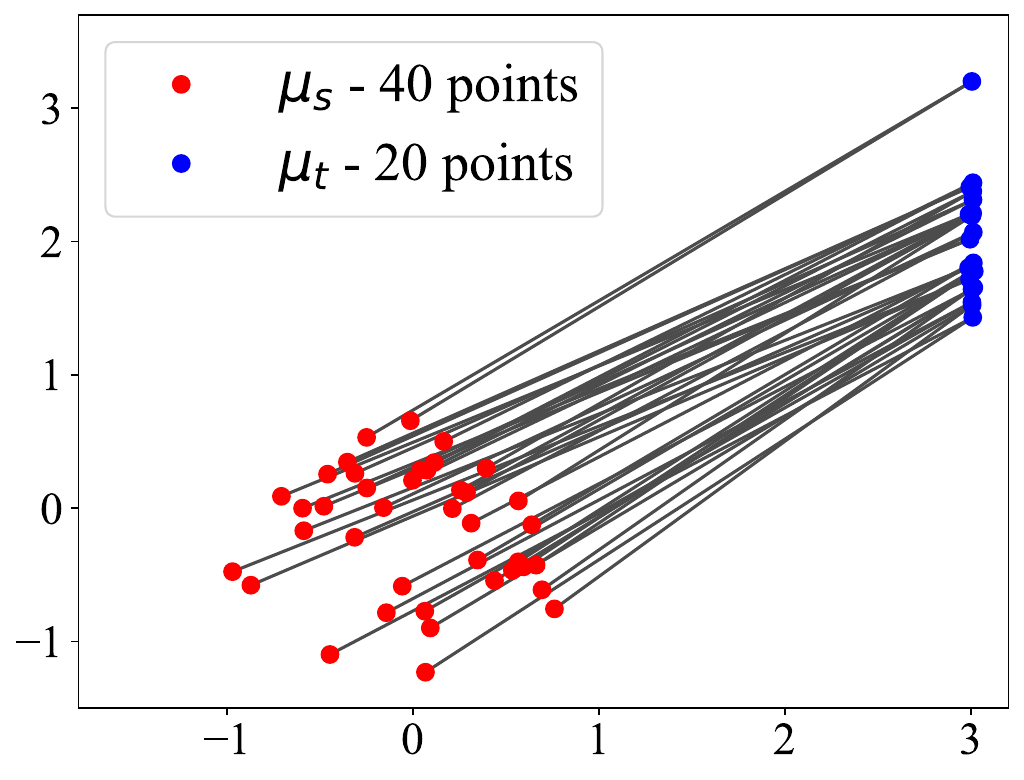}
  \end{subfigure}\hfill
  \begin{subfigure}[b]{0.32\textwidth}
    \centering
    \includegraphics[width=\linewidth]{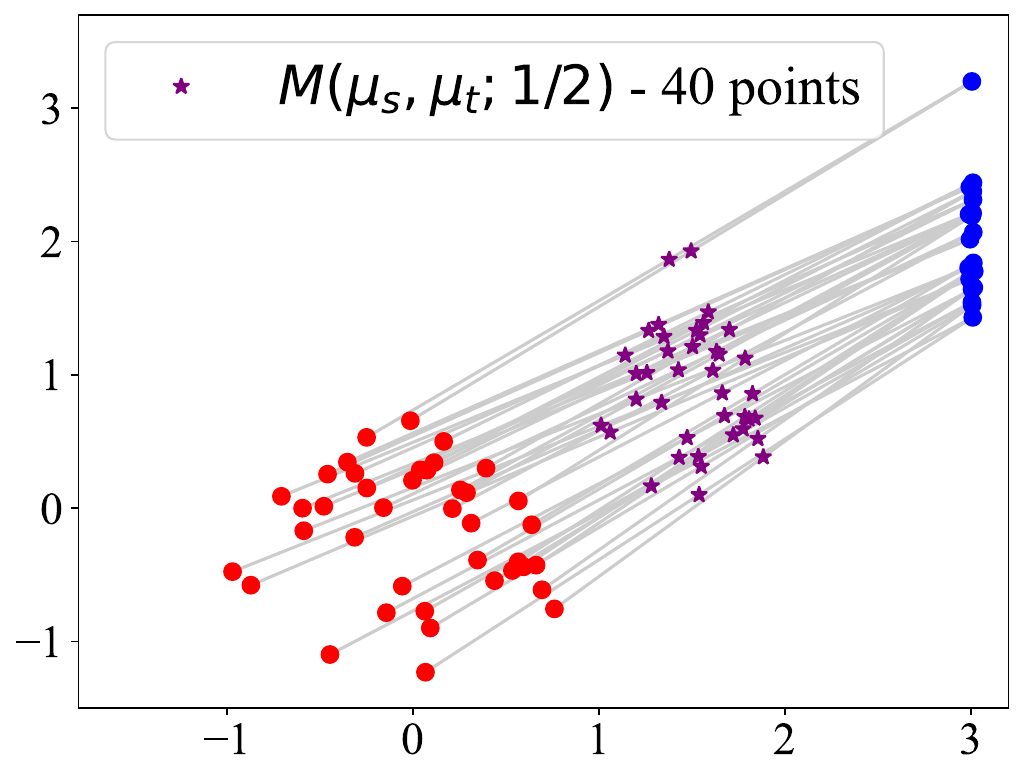}
  \end{subfigure}
  \caption{Illustration of the discrete $\ominus$ operator and McCann's average. On the left, the original source and target measures with uniform distribution over $40$ and $20$ points in $\mathbb{R}^2$, respectively. On the middle, the gray vectors depict the optimal transport plan for the quadratic cost between the measures. The difference $\ominus$ encodes these vectors in addition to the masses transported along each vector, $1/40$ in this case. On the right, McCann's average between the two measures.}
  \label{fig:operations_on_discrete_measures}
\end{figure}

We proceed with an insightful remark that will become essential in the following section.

\begin{remark}~\label{remark:psi_dichotomy}
    The addition operator~\eqref{eqn:discrete_oplus} that is suitable for discrete measures agrees with its counterpart~\eqref{eqn:oplus} in the following sense. Let $\mu\in\mathcal{P}_p(\mathbb{R}^d)$ be a discrete probability measure as in~\eqref{eqn:arbitrary_discrete_measures}, and let $\psi:\mathbb{R}^d\rightarrow\mathbb{R}^d$ be a measurable map. Then, the addition~\eqref{eqn:oplus} is well defined and becomes
    \begin{equation}~\label{eqn:alternative_oplus}
        \mu\oplus\psi \;=\;(I+\psi)_\#\mu \;= \sum_{y\;\in\;(I+\psi)(\{x^\mu_1,\dots,x^\mu_m\})}\bigg(\sum_{r\;:\; (I+\psi)(x^\mu_r)\;=\;y}p^\mu_r\bigg)\delta_{y}.
    \end{equation}
    In particular, if $I+\psi$ is an injective map, then the outer summation will run over $y=x^\mu_i+\psi(x^\mu_i)$ for $i=1,\dots, m$ while the inner summation will contain only one summand. Otherwise, the inner sum accounts for all the masses transported to the same point from different sources, as illustrated in Figure~\ref{fig:operations_on_discrete_measures}.
    
    The delicate equivalence between~\eqref{eqn:discrete_oplus} and~\eqref{eqn:alternative_oplus} reveals the dichotomous nature of $\psi$ and how it is possible to treat the difference $\ominus$ between two discrete measures. On the one hand, we can encode the difference between $\mu$ and $\nu$ in a practical way as the pair $\psi=(x^\psi, \Lambda^\psi)$ where $x^\psi$ is a 3-dimensional tensor as in~\eqref{eqn:discrete_ominus}. On the other hand, we can consider the outcome as a measurable map $\psi:\mathbb{R}^d\rightarrow\mathbb{R}^d$ interpolating the vectors of the tensor $x^\psi$, i.e., $\psi(x^\mu_i)=x^\nu_j$ for all $i=1,\dots,m$ and $j=1,\dots,n$, alongside the coupling matrix $\Lambda_\mu^\nu$ that tells us how much mass is transported from $x^\mu_i$ to $x^\nu_j$.
\end{remark}

Overall, the $\ominus$ operation of~\eqref{eqn:discrete_ominus} encodes the information for optimally transporting $\mu$ to $\nu$, while the $\oplus$ operation of~\eqref{eqn:discrete_oplus} takes $\mu$ and reconstructs $\nu$ according to the stored information, and therefore the compatibility condition~\eqref{eqn:operators_compatibility} holds for this construction. More importantly, the discrete version of the useful relation~\eqref{eqn:useful_relation} becomes
\begin{equation}
    \mathcal{J}_p(\Lambda_\mu^\nu) = \sum_{i=1}^m \sum_{j=1}^n \|x^\mu_i-x^\nu_j\|^p\lambda_{i,j}=\|\nu\ominus\mu\|^p_{\Lambda_\mu^\nu}=W^p_p(\mu, \nu),
\end{equation}
for the functional $\mathcal{J}_p$ of~\eqref{eqn:wasserstein_functional}, where $W_p$ is the Wasserstein distance~\eqref{eqn:Wasserstein_distance}, and the norm $\|\cdot\|^p_{\Lambda_\mu^\nu}$ is generally defined on $\psi=(x^\psi,\Lambda^\psi)$ by
\begin{equation}~\label{eqn:semi_norm}
   \|(x^\psi,\Lambda^\psi)\|^p_{\Lambda^\psi}=\sum_{i=1}^m\sum_{j=1}^k\|x^\psi_{i,j}\|^p\lambda^\psi_{i,j}.
\end{equation}

Algebraically, it is easy to see that the function $\|\cdot \|_{\Lambda^\psi}$ defines a semi-norm because it is nonnegative, homogeneous, and the triangle inequality is satisfied when the operations are defined on the tensor $x^\psi$, that is the first argument of $\ominus$. Although the expression~\eqref{eqn:semi_norm} can be zero for a nonzero pair $(x^\psi, \Lambda^\psi)$, e.g., when $\|x^\psi\|$ and $\Lambda^\psi$ have disjoint supports, we restrict the use of this norm to pairs that are obtained from the optimal transport theory. In particular, $x^\psi$ must encode vectors from some discrete measure to another, and $\Lambda^\psi$ is the coupling matrix between them solving~\eqref{eqn:discrete_Kantorovich}. This relation between the arguments guarantees, considering the properties of the Wasserstein distance, that $\|\nu\ominus\mu\|_{\Lambda_\mu^\nu}=0$ holds if and only if $\nu-\mu=0$. That is, $\mu=\nu$. Therefore, $\|\cdot\|_{\Lambda^\psi}$ becomes a norm under the restriction. In the terminology of optimal transport, the norm of the pair $(x^\psi,\Lambda^\psi)$ measures the total weighted displacement along the vectors $x^\psi$ according to the transport plan $\Lambda^\psi$.

Finally, multiscaling sequences of discrete measures is done in a similar fashion to~\eqref{eqn:elementary_multiscaling} where the operators involved are $\mathfrak{M}$ of~\eqref{eqn:discrete_McCann_average}, $\ominus$ and $\oplus$ of~\eqref{eqn:discrete_ominus} and~\eqref{eqn:discrete_oplus}. Therefore, all the discussions of Section~\ref{sec:Multiscaling_continuous} that are subsequent to~\eqref{eqn:elementary_multiscaling}, including the optimality number~\eqref{eqn:optimality_number}, extend naturally to the discrete case via the definitions provided in this section. In the next section, we study the properties of the multiscale transform.

\section{Theoretical results}\label{sec:theoretical_results}

In the spirit of classical results in harmonic analysis and wavelet theory, the two main theoretical results of this section establish that regularity of the analyzed sequence is reflected in the decay of its detail coefficients across scales, and that small perturbations to the pyramid representation yield proportionally small errors in the reconstructed sequence. In this section, we present our theoretical results that are suitable for the two cases: the case of absolutely continuous measures discussed in Section~\ref{sec:Multiscaling_continuous}, and the case of discrete measures discussed in Section~\ref{sec:multiscaling_discrete}. 

We use the norm notation $\|\cdot\|_{L^p(\mu)}$ appearing in~\eqref{eqn:useful_relation} to formalize our results in a general manner. That is, if the analyzed measures are discrete, then the theorems hold true when the norm is replaced with $\|(\cdot, \;\Lambda^\psi)\|_{\Lambda^\psi}$ of~\eqref{eqn:semi_norm}, when $\Lambda^\psi$ is understood from the context. Furthermore, the notations in~\eqref{eqn:useful_norms} are adapted to sequences of discrete measures via~\eqref{eqn:semi_norm} in a natural manner.

We first define the operator $\Delta$ acting on sequences of measures. Let $\boldsymbol{\mu}=\{\mu_i\}_{i\in\mathbb{Z}}\in\mathcal{P}_p(\mathbb{R}^d)$, then
\begin{equation}~\label{eqn:Delta}
    \Delta\boldsymbol{\mu} = \sup_{i\in\mathbb{Z}}W_p(\mu_i, \mu_{i+1}).
\end{equation}
The following lemma provides an estimate on the detail coefficients of~\eqref{eqn:elementary_multiscaling} that will become essential in main results.

\begin{lemma}~\label{lem:details_delta_bound}
     Let $\boldsymbol{\mu}^{(J)}$ be a sequence in $\mathcal{P}_p(\mathbb{R}^d)$ associated with the grid $2^{-J}\mathbb{Z}$. Then the detail coefficients $\boldsymbol{\psi}^{(\ell)}$ generated by the elementary multiscale transform~\eqref{eqn:elementary_multiscaling} satisfy
     \begin{equation}~\label{eqn:details_delta_bound}
         \|\boldsymbol{\psi}^{(\ell)}\|_{\infty} \leq 2\Delta \boldsymbol{\mu}^{(\ell)}, \quad \ell=1,\dots,J.
     \end{equation}
\end{lemma}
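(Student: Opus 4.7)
The plan is to bound each detail coefficient $\psi^{(\ell)}_i$ by recognizing, via the identity in~\eqref{eqn:useful_relation}, that its $L^p$ norm coincides with a Wasserstein distance, then controlling that distance by a two-term triangle-inequality argument that exploits the constant-speed geodesic property~\eqref{eqn:constant_speed} of McCann's interpolants.

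First I would split into the even and odd index cases. For even indices $i=2k$, the interpolating nature of $\mathcal{S}$ gives $(\mathcal{S}\boldsymbol{\mu}^{(\ell-1)})_{2k}=\mu^{(\ell-1)}_{k}=\mu^{(\ell)}_{2k}$, so $\psi^{(\ell)}_{2k}=\mu^{(\ell)}_{2k}\ominus \mu^{(\ell)}_{2k}=0$, which certainly satisfies the claimed bound. For odd indices $i=2k+1$, the refinement rule gives $(\mathcal{S}\boldsymbol{\mu}^{(\ell-1)})_{2k+1}=\mathfrak{M}(\mu^{(\ell)}_{2k},\mu^{(\ell)}_{2k+2};1/2)$, so by~\eqref{eqn:useful_relation}
\begin{equation*}
\|\psi^{(\ell)}_{2k+1}\|_{L^p((\mathcal{S}\boldsymbol{\mu}^{(\ell-1)})_{2k+1})}=W_p\bigl(\mu^{(\ell)}_{2k+1},\,\mathfrak{M}(\mu^{(\ell)}_{2k},\mu^{(\ell)}_{2k+2};1/2)\bigr).
\end{equation*}

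Next I would apply the triangle inequality for $W_p$, inserting the left endpoint $\mu^{(\ell)}_{2k}$, to obtain
\begin{equation*}
W_p\bigl(\mu^{(\ell)}_{2k+1},\mathfrak{M}(\mu^{(\ell)}_{2k},\mu^{(\ell)}_{2k+2};1/2)\bigr)\leq W_p(\mu^{(\ell)}_{2k+1},\mu^{(\ell)}_{2k})+W_p\bigl(\mu^{(\ell)}_{2k},\mathfrak{M}(\mu^{(\ell)}_{2k},\mu^{(\ell)}_{2k+2};1/2)\bigr).
\end{equation*}
The first summand is at most $\Delta\boldsymbol{\mu}^{(\ell)}$ by definition~\eqref{eqn:Delta}. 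For the second, I would invoke~\eqref{eqn:constant_speed} with $s=0$, $t=1/2$, so that the distance equals $\tfrac{1}{2}W_p(\mu^{(\ell)}_{2k},\mu^{(\ell)}_{2k+2})$, and then apply the triangle inequality once more together with~\eqref{eqn:Delta} to get $\tfrac{1}{2}W_p(\mu^{(\ell)}_{2k},\mu^{(\ell)}_{2k+2})\leq \tfrac{1}{2}(2\Delta\boldsymbol{\mu}^{(\ell)})=\Delta\boldsymbol{\mu}^{(\ell)}$. Summing yields the bound $2\Delta\boldsymbol{\mu}^{(\ell)}$ uniformly in $k$, and taking the supremum over all indices gives~\eqref{eqn:details_delta_bound}.

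I expect no serious obstacle here; the argument is purely a triangle-inequality exercise once one identifies the norm with a Wasserstein distance. The only subtle point to articulate cleanly is that the norm defining $\|\boldsymbol{\psi}^{(\ell)}\|_\infty$ is taken with respect to the predicted measures $(\mathcal{S}\boldsymbol{\mu}^{(\ell-1)})_i$ rather than the $\mu^{(\ell)}_i$, which is precisely the setting in which~\eqref{eqn:useful_relation} applies. For the discrete case, by the convention stated at the start of Section~\ref{sec:theoretical_results}, the norm $\|\cdot\|_{L^p(\mu)}$ is replaced by $\|(\cdot,\Lambda^\psi)\|_{\Lambda^\psi}$, and the same identification with $W_p$ together with the triangle inequality for $W_p$ yields the bound without modification.
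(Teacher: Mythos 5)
Your proposal is correct and follows essentially the same route as the paper's proof: zero details at even indices, the identification of $\|\psi^{(\ell)}_{2k+1}\|$ with $W_p\bigl(\mu^{(\ell)}_{2k+1},\mathfrak{M}(\mu^{(\ell)}_{2k},\mu^{(\ell)}_{2k+2};1/2)\bigr)$ via~\eqref{eqn:useful_relation}, the triangle inequality through $\mu^{(\ell)}_{2k}$, the constant-speed property~\eqref{eqn:constant_speed} to halve the endpoint distance, and a final triangle inequality to reach $2\Delta\boldsymbol{\mu}^{(\ell)}$. Your remark that the base measure for the $L^p$ norm is the predicted measure $(\mathcal{S}\boldsymbol{\mu}^{(\ell-1)})_{2k+1}$ is a fair point of care (the paper writes $L^p(\mu^{(\ell)}_{2k+1})$), but since the identified quantity is the symmetric Wasserstein distance, the bound and argument are unaffected.
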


\begin{proof}
    The elements $\psi^{(\ell)}_{2i}$ are equal to the trivial zero map for all $\ell=1,\dots,J$ and $i\in\mathbb{Z}$. Therefore, $\|\psi^{(\ell)}_{2i}\|_{L^p(\mu^{(\ell)}_{2i})}=0$. Direct calculations of a general term $\psi^{(\ell)}_{2i+1}$ associated with an odd index give
    \begin{align*}
        \psi^{(\ell)}_{2i+1} & = \mu^{(\ell)}_{2i+1} \ominus \mathfrak{M}\big(\mu^{(\ell-1)}_{i}, \mu^{(\ell-1)}_{i+1}; \frac{1}{2}\big) = \mu^{(\ell)}_{2i+1} \ominus \mathfrak{M}\big(\mu^{(\ell)}_{2i}, \mu^{(\ell)}_{2i+2}; \frac{1}{2}\big).
    \end{align*}
    Consequently, by~\eqref{eqn:useful_relation} we get
    \begin{align*}
        \|\psi^{(\ell)}_{2i+1}\|_{L^p(\mu^{(\ell)}_{2i+1})} & = W_p\big(\mu^{(\ell)}_{2i+1},\; \mathfrak{M}(\mu^{(\ell)}_{2i}, \mu^{(\ell)}_{2i+2}; \frac{1}{2})\big) \\
        & \leq W_p\big(\mu^{(\ell)}_{2i+1}, \mu^{(\ell)}_{2i}\big) + W_p\big(\mu^{(\ell)}_{2i},\;\mathfrak{M}(\mu^{(\ell)}_{2i}, \mu^{(\ell)}_{2i+2}; \frac{1}{2})\big) \\
        & \leq \Delta \boldsymbol{\mu}^{(\ell)} + \frac{1}{2} W_p(\mu^{(\ell)}_{2i}, \mu^{(\ell)}_{2i+2}) \leq 2 \Delta \boldsymbol{\mu}^{(\ell)}.
    \end{align*}
    The first inequality is due to the metric property of $W_p$, and the second inequality is due to the constant-speed property~\eqref{eqn:constant_speed}. Taking the supremum norm over $i\in\mathbb{Z}$ gives the required result.
\end{proof}

The following theorem is a direct implication of Lemma~\ref{lem:details_delta_bound} and provides a clearer bound on $\|\boldsymbol{\psi}^{(\ell)}\|_{\infty}$ that decays geometrically provided a priori on $\boldsymbol{\mu}^{(J)}$.

\begin{theorem}~\label{thm:details_decay}
    Let $\boldsymbol{\mu}=\{\mu_t\}_{t\in\mathbb{R}}$ be an absolutely continuous curve in $\mathcal{P}_p(\mathbb{R}^d)$ with a finite metric derivative~\eqref{eqn:metric_derivative}, that is, $\Gamma = \sup_{t\in\mathbb{R}}|\boldsymbol{\mu}^\prime|_t<\infty$. If the sequence $\boldsymbol{\mu}^{(J)}$ is sampled from $\boldsymbol{\mu}$ over the dyadic grid $2^{-J}\mathbb{Z}$, then
    \begin{equation}~\label{eqn:details_decay}
        \|\boldsymbol{\psi}^{(\ell)}\|_{\infty} \leq \Gamma 2^{1-\ell}, \quad \ell=1,\dots,J,
    \end{equation}
    where $\boldsymbol{\psi}^{(\ell)}$ are the detail coefficients generated by the elementary multiscale transform~\eqref{eqn:elementary_multiscaling}.
\end{theorem}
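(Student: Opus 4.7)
The proof plan is to combine Lemma~\ref{lem:details_delta_bound} with the elementary observation that the metric-derivative hypothesis controls pairwise Wasserstein distances along the curve, and then to track how the dyadic downsampling rescales those gaps.

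First, I would identify $\boldsymbol{\mu}^{(\ell)}$ explicitly as samples from the curve $\boldsymbol{\mu}$. Since $\boldsymbol{\mu}^{(J)}$ is sampled at times $2^{-J}\mathbb{Z}$, and the downsampling operator $\mathcal{D}$ from~\eqref{eqn:elementary_multiscaling} keeps every other element, a straightforward induction on $J-\ell$ shows that $\boldsymbol{\mu}^{(\ell)}$ is precisely the restriction of $\boldsymbol{\mu}$ to the coarser dyadic grid $2^{-\ell}\mathbb{Z}$. In particular, two consecutive measures $\mu^{(\ell)}_i$ and $\mu^{(\ell)}_{i+1}$ correspond to points of $\boldsymbol{\mu}$ at parameter distance exactly $2^{-\ell}$.

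Next, I would invoke the standard fact from the theory of absolutely continuous curves in metric spaces: for any absolutely continuous $\boldsymbol{\mu}$ and any $s\leq t$, one has $W_p(\mu_s,\mu_t)\leq \int_s^t |\boldsymbol{\mu}^\prime|_r\,dr$, which is the very definition of absolute continuity from which the metric derivative~\eqref{eqn:metric_derivative} is extracted (see, e.g., \cite{ambrosio2008gradient}). Using the hypothesis $\Gamma=\sup_t|\boldsymbol{\mu}^\prime|_t<\infty$, this yields
\begin{equation*}
W_p\bigl(\mu^{(\ell)}_i,\mu^{(\ell)}_{i+1}\bigr)\;\leq\;\int_{i\cdot 2^{-\ell}}^{(i+1)\cdot 2^{-\ell}}|\boldsymbol{\mu}^\prime|_r\,dr\;\leq\;\Gamma\cdot 2^{-\ell}.
\end{equation*}
Taking the supremum over $i$ gives $\Delta\boldsymbol{\mu}^{(\ell)}\leq\Gamma\cdot 2^{-\ell}$, where $\Delta$ is defined in~\eqref{eqn:Delta}.

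Finally, I would plug this estimate into Lemma~\ref{lem:details_delta_bound} to conclude
\begin{equation*}
\|\boldsymbol{\psi}^{(\ell)}\|_\infty\;\leq\;2\,\Delta\boldsymbol{\mu}^{(\ell)}\;\leq\;2\,\Gamma\cdot 2^{-\ell}\;=\;\Gamma\cdot 2^{1-\ell},
\end{equation*}
which is~\eqref{eqn:details_decay}. The argument is short because Lemma~\ref{lem:details_delta_bound} already did the nontrivial geometric work of relating detail norms to consecutive Wasserstein gaps. The only genuine issue is keeping the indexing of the downsampled sequences consistent with the dyadic sampling interpretation; once that bookkeeping is settled, the bound on $\Delta\boldsymbol{\mu}^{(\ell)}$ is immediate from the metric-derivative hypothesis. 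In the discrete-measure case, the same proof applies verbatim since, as noted at the start of Section~\ref{sec:theoretical_results}, the relevant norm and distance identities transfer through~\eqref{eqn:semi_norm}.
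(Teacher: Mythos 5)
Your proof is correct and takes essentially the same route as the paper: both arguments reduce the theorem to the bound $\Delta\boldsymbol{\mu}^{(\ell)}\leq\Gamma\,2^{-\ell}$ and then invoke Lemma~\ref{lem:details_delta_bound}. The only (harmless) difference is that you derive this bound directly at every level from the absolute-continuity estimate $W_p(\mu_s,\mu_t)\leq\int_s^t|\boldsymbol{\mu}^\prime|_r\,dr$, whereas the paper establishes it only at the finest level $J$ via the discrete velocity field~\eqref{eqn:norm_discrete_tangent_vector} and then propagates upward using the triangle-inequality estimate $\Delta\boldsymbol{\mu}^{(\ell-1)}\leq 2\Delta\boldsymbol{\mu}^{(\ell)}$.
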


\begin{proof}
    $\boldsymbol{\mu}$ is an absolutely continuous curve, hence there exists a Borel vector field $v_t$ such that the continuity equation~\eqref{eqn:continuity_equation} is satisfied. Because $\boldsymbol{\mu}^{(J)}$ is sampled from $\boldsymbol{\mu}$ at $2^{-J}\mathbb{Z}$, then straightforward calculations joining~\eqref{eqn:norm_discrete_tangent_vector} and~\eqref{eqn:Delta} show that
    \begin{equation*}
        \Delta\boldsymbol{\mu}^{(J)}=\sup_{t\in\mathbb{Z}} 2^{-J} \|v_t\|_{L^p(\mu^{(J)}_t)} \leq 2^{-J} \sup_{t\in\mathbb{Z}} |\boldsymbol{\mu}^\prime|_t=2^{-J} \Gamma.
    \end{equation*}
    Moreover, since $\boldsymbol{\mu}^{(\ell-1)}$ is obtained by decimating $\boldsymbol{\mu}^{(\ell)}$ with $\mathcal{D}$ for every $\ell=1,\dots, J$, we have that $\Delta\boldsymbol{\mu}^{(\ell-1)}\leq 2 \Delta\boldsymbol{\mu}^{(\ell)}$. Iteratively, one concludes $\Delta\boldsymbol{\mu}^{(\ell)}\leq 2^{-\ell}\Gamma$. Combining this result with~\eqref{eqn:details_delta_bound} gives the required.
\end{proof}

Theorem~\ref{thm:details_decay} suggests that if a sequence of probability measures in $\mathcal{P}_p(\mathbb{R}^d)$ behaves according to a vector field with finite metric derivative, then the norms of its detail coefficients must decay geometrically with a factor less than (or equal to) $2$ at each level. Practically, the theorem can be used to determine whether a sequence of measures obeys, or flows according to, a given vector field. Conversely, the theorem is useful for studying vector fields through analyzing empirical sequences of measures, i.e., the theorem can reveal whether the pair solves~\eqref {eqn:continuity_equation}.

We now prove the stability of the reconstruction process. Firstly, we invoke two useful inequalities from the optimal transport theory~\cite{panaretos2020invitation, peyre2019computational, villani2021topics}. Observe that for any measures $\mu,\nu\in\mathcal{P}_p(\mathbb{R}^d)$ and measurable Lipschitz maps $\psi, \widetilde{\psi}:\mathbb{R}^d\to\mathbb{R}^d$, the inequalities
\begin{equation}~\label{eqn:stability_inequalities}
    W_p(\psi_\#\mu,\widetilde{\psi}_\#\mu) \leq \|\psi-\widetilde{\psi}\|_{L^p(\mu)} \quad \text{and} \quad W_p(\psi_\#\mu,\psi_\#\nu) \leq \|\psi\|_{\text{Lip}}W_p(\mu, \nu),
\end{equation}
are satisfied, where $\|\psi\|_{\text{Lip}}$ is the Lipschitz constant given by
\begin{equation}~\label{eqn:Lipschitz_norm}
    \|\psi\|_{\text{Lip}} = \sup_{x\neq y}\frac{\|\psi(x)-\psi(y)\|}{\|x-y\|}.
\end{equation}
We provide the proof of these inequalities in the appendix. Secondly, for sequences $\boldsymbol{\mu}$ and $\boldsymbol{\nu}$ in $\mathcal{P}_p(\mathbb{R}^d)$ we define $\mathcal{W}_p(\boldsymbol{\mu}, \boldsymbol{\nu})=\sup_{i\in\mathbb{Z}}W_p(\mu_i, \nu_i)$. That is, the supremum of pair-wise distances. Lastly, we define the stability condition for refinement rules.

\begin{defin}~\label{defin:subdivision_stability}
    We say that a refinement rule $\mathcal{S}$ is \emph{stable} if for every two sequences $\boldsymbol{\mu}$ and $\boldsymbol{\nu}$ in $\mathcal{P}_p(\mathbb{R}^d)$, there exists a constant $K>0$ such that
    \begin{equation}~\label{eqn:subdivision_stability}
    \mathcal{W}_p(\mathcal{S}\boldsymbol{\mu},\mathcal{S}\boldsymbol{\nu})\leq K\mathcal{W}_p(\boldsymbol{\mu}, \boldsymbol{\nu}).
    \end{equation}
\end{defin}

A similar stability condition has been studied in~\cite{grohs2010stability}, including refinements on manifolds. Showing that the subdivision scheme $\mathcal{S}$ of~\eqref{eqn:subdivision_scheme} is stable is not a trivial task. The constant $K$ may depend on the curvature of the space $\mathcal{P}_p(\mathbb{R}^d)$. However, it is reasonable to assume that $\mathcal{S}$ is stable for dense enough sequences. In particular, assume $\boldsymbol{\mu}$ and $\boldsymbol{\nu}$ are  sequences such that $\Delta\boldsymbol{\mu}, \Delta\boldsymbol{\nu}\leq \delta$ for some $\delta>0$. Then for the new refinement elements we get
\begin{align*}
    W_p\big((\mathcal{S}\boldsymbol{\mu})_{2i+1}, (\mathcal{S}\boldsymbol{\nu})_{2i+1}\big) & \leq W_p\big((\mathcal{S}\boldsymbol{\mu})_{2i+1}, \mu_{i}\big) + W_p\big(\mu_{i}, \nu_{i}\big) + W_p\big(\nu_{i}, (\mathcal{S}\boldsymbol{\nu})_{2i+1}\big)
    \\ & = \frac{1}{2}W_p\big(\mu_i, \mu_{i+1}\big) + W_p\big(\mu_i, \nu_i\big) + \frac{1}{2}W_p\big( \nu_i, \nu_{i+1}\big) \leq \delta + W_p\big(\mu_i, \nu_i\big).
\end{align*}
Hence $\mathcal{W}_p(\mathcal{S}\boldsymbol{\mu}, \mathcal{S}\boldsymbol{\nu})\leq \delta + \mathcal{W}_p(\boldsymbol{\mu}, \boldsymbol{\nu})$. Therefore, by assuming $\delta\leq (K-1)\mathcal{W}_p(\boldsymbol{\mu},\boldsymbol{\nu})$ is small enough we get stability of $\mathcal{S}$ with constant $K$ for the pair of sequences. We are now ready to present and prove the multiscale stability result.

\begin{theorem}~\label{thm:stability}
    Let $\{\boldsymbol{\mu}^{(0)}; \boldsymbol{\psi}^{(1)},\dots,\boldsymbol{\psi}^{(J)}\}$ and $\{\widetilde{\boldsymbol{\mu}}^{(0)}; \widetilde{\boldsymbol{\psi}}^{(1)},\dots,\widetilde{\boldsymbol{\psi}}^{(J)}\}$ be two pyramid representations of two sequences $\boldsymbol{\mu}^{(J)}$ and $\widetilde{\boldsymbol{\mu}}^{(J)}$ in $\mathcal{P}_p(\mathbb{R}^d)$, respectively. Assume that the detail coefficients are uniformly bounded in their Lipschitz norm, that is $\|\psi^{(\ell)}_i\|_{\text{Lip}}\leq C$ for all $\ell=1, \dots, J$ and $i\in\mathbb{Z}$. If the subdivision scheme $\mathcal{S}$ involved in multiscaling is stable with the constant $K$, then
    \begin{equation}~\label{eqn:stability}
        \mathcal{W}_p(\boldsymbol{\mu}^{(J)}, \widetilde{\boldsymbol{\mu}}^{(J)}) \leq L\bigg(\mathcal{W}_p(\boldsymbol{\mu}^{(0)}, \widetilde{\boldsymbol{\mu}}^{(0)})+\sum_{\ell=1}^J\|\boldsymbol{\psi}^{(\ell)}-\widetilde{\boldsymbol{\psi}}^{(\ell)}\|_\infty\bigg),
    \end{equation}
    where $L=1$ if $KC\leq1$ and $L=(KC)^J$ otherwise.
\end{theorem}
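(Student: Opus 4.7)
The plan is to proceed by induction on the scale $\ell$, establishing a one–step recursive bound of the form
\begin{equation*}
    \mathcal{W}_p(\boldsymbol{\mu}^{(\ell)}, \widetilde{\boldsymbol{\mu}}^{(\ell)}) \;\leq\; KC\,\mathcal{W}_p(\boldsymbol{\mu}^{(\ell-1)}, \widetilde{\boldsymbol{\mu}}^{(\ell-1)}) \;+\; \|\boldsymbol{\psi}^{(\ell)} - \widetilde{\boldsymbol{\psi}}^{(\ell)}\|_\infty, \qquad \ell=1,\dots,J,
\end{equation*}
and then unrolling it all the way down to scale $0$ to recover the claimed bound \eqref{eqn:stability}.

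To derive the one–step bound, fix an index $i\in\mathbb{Z}$ and write $\alpha_i = (\mathcal{S}\boldsymbol{\mu}^{(\ell-1)})_i$ and $\widetilde{\alpha}_i = (\mathcal{S}\widetilde{\boldsymbol{\mu}}^{(\ell-1)})_i$ for the two predicted measures. The synthesis rule \eqref{eqn:elementary_synthesis} gives $\mu_i^{(\ell)} = (I+\psi_i^{(\ell)})_\#\alpha_i$ and $\widetilde{\mu}_i^{(\ell)} = (I+\widetilde{\psi}_i^{(\ell)})_\#\widetilde{\alpha}_i$. I would interpose the auxiliary measure $(I+\widetilde{\psi}_i^{(\ell)})_\#\alpha_i$ and apply the triangle inequality for $W_p$:
\begin{equation*}
    W_p\big(\mu_i^{(\ell)}, \widetilde{\mu}_i^{(\ell)}\big) \;\leq\; W_p\big((I+\psi_i^{(\ell)})_\#\alpha_i,\; (I+\widetilde{\psi}_i^{(\ell)})_\#\alpha_i\big) \;+\; W_p\big((I+\widetilde{\psi}_i^{(\ell)})_\#\alpha_i,\; (I+\widetilde{\psi}_i^{(\ell)})_\#\widetilde{\alpha}_i\big).
\end{equation*}
The first term compares two pushforwards of the \emph{same} measure $\alpha_i$, so by the first inequality in \eqref{eqn:stability_inequalities} it is bounded by $\|\psi_i^{(\ell)} - \widetilde{\psi}_i^{(\ell)}\|_{L^p(\alpha_i)} \leq \|\boldsymbol{\psi}^{(\ell)} - \widetilde{\boldsymbol{\psi}}^{(\ell)}\|_\infty$. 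The second term is the pushforward of the \emph{same} map applied to two different measures, so by the second inequality in \eqref{eqn:stability_inequalities} it is bounded by $\|I+\widetilde{\psi}_i^{(\ell)}\|_{\text{Lip}}\cdot W_p(\alpha_i,\widetilde{\alpha}_i) \leq C\cdot W_p(\alpha_i,\widetilde{\alpha}_i)$ by the uniform Lipschitz hypothesis. Taking the supremum over $i$ and invoking the stability of $\mathcal{S}$ from Definition~\ref{defin:subdivision_stability} converts $\mathcal{W}_p(\mathcal{S}\boldsymbol{\mu}^{(\ell-1)},\mathcal{S}\widetilde{\boldsymbol{\mu}}^{(\ell-1)})$ into $K\,\mathcal{W}_p(\boldsymbol{\mu}^{(\ell-1)},\widetilde{\boldsymbol{\mu}}^{(\ell-1)})$, yielding the claimed recursion.

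Iterating the recursion from $\ell=J$ down to $\ell=0$ gives the geometric sum
\begin{equation*}
    \mathcal{W}_p(\boldsymbol{\mu}^{(J)}, \widetilde{\boldsymbol{\mu}}^{(J)}) \;\leq\; (KC)^J\,\mathcal{W}_p(\boldsymbol{\mu}^{(0)}, \widetilde{\boldsymbol{\mu}}^{(0)}) + \sum_{\ell=1}^J (KC)^{J-\ell}\,\|\boldsymbol{\psi}^{(\ell)}-\widetilde{\boldsymbol{\psi}}^{(\ell)}\|_\infty,
\end{equation*}
and then the two regimes of $L$ fall out by inspection: if $KC\leq 1$ each factor $(KC)^{J-\ell}$ is at most $1$ and we may set $L=1$; otherwise we dominate every factor by $(KC)^J$ to obtain $L=(KC)^J$.

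The main technical obstacle is the bookkeeping of the Lipschitz bound on the reconstruction map: the second inequality of \eqref{eqn:stability_inequalities} literally needs the Lipschitz norm of $I+\widetilde{\psi}_i^{(\ell)}$, which is a priori only $\leq 1+C$, so the constant $C$ in the statement should be read as a bound on the Lipschitz norm of the effective transport map $I+\widetilde{\psi}_i^{(\ell)}$ (equivalently, as absorbing the contribution from the identity). A secondary subtlety is the choice of base measure against which the $L^p$ norms of the detail maps are computed; our argument naturally produces integration against $\alpha_i = (\mathcal{S}\boldsymbol{\mu}^{(\ell-1)})_i$, which is the canonical base point of $\psi_i^{(\ell)}$ in the synthesis pyramid and is consistent with the convention implicit in~\eqref{eqn:useful_norms}.
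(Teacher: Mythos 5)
Your proposal is correct and follows essentially the same argument as the paper: a one-step triangle-inequality recursion using the two inequalities in \eqref{eqn:stability_inequalities} together with the stability of $\mathcal{S}$, then unrolled from $\ell=J$ down to $0$; the only difference is the symmetric choice of intermediate measure (you interpose $(I+\widetilde{\psi}^{(\ell)}_i)_\#(\mathcal{S}\boldsymbol{\mu}^{(\ell-1)})_i$, while the paper interposes $\mathcal{S}\widetilde{\boldsymbol{\mu}}^{(\ell-1)}\oplus\boldsymbol{\psi}^{(\ell)}$), which merely swaps which predicted sequence serves as the base measure for $\|\boldsymbol{\psi}^{(\ell)}-\widetilde{\boldsymbol{\psi}}^{(\ell)}\|_\infty$ and which detail supplies the Lipschitz factor. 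Your closing observation that the Lipschitz bound is literally needed for $I+\psi$ (hence $1+C$ rather than $C$, unless $C$ is read as a bound on the effective map) is accurate and applies equally to the paper's own proof, which silently makes the same identification.
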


\begin{proof}
    Recall that the sequences $\boldsymbol{\mu}^{(J)}$ and $\widetilde{\boldsymbol{\mu}}^{(J)}$ are synthesized by their corresponding pyramid representations via~\eqref{eqn:elementary_synthesis}. Observe that for any $\ell=1,\dots,J$ we have
    \begin{align*}
        \mathcal{W}_p(\boldsymbol{\mu}^{(\ell)}, \widetilde{\boldsymbol{\mu}}^{(\ell)}) & = \mathcal{W}_p\big(\mathcal{S}\boldsymbol{\mu}^{(\ell-1)}\oplus\boldsymbol{\psi}^{(\ell)},\; \mathcal{S}\widetilde{\boldsymbol{\mu}}^{(\ell-1)}\oplus\widetilde{\boldsymbol{\psi}}^{(\ell)}\big) \\
        & \leq \mathcal{W}_p\big(\mathcal{S}\boldsymbol{\mu}^{(\ell-1)}\oplus\boldsymbol{\psi}^{(\ell)},\; \mathcal{S}\widetilde{\boldsymbol{\mu}}^{(\ell-1)}\oplus\boldsymbol{\psi}^{(\ell)}\big) + \mathcal{W}_p\big(\mathcal{S}\widetilde{\boldsymbol{\mu}}^{(\ell-1)}\oplus\boldsymbol{\psi}^{(\ell)},\; \mathcal{S}\widetilde{\boldsymbol{\mu}}^{(\ell-1)}\oplus\widetilde{\boldsymbol{\psi}}^{(\ell)}\big) \\
        & \leq KC \mathcal{W}_p(\boldsymbol{\mu}^{(\ell-1)},\widetilde{\boldsymbol{\mu}}^{(\ell-1)}) + \big\|\boldsymbol{\psi}^{(\ell)} -\widetilde{\boldsymbol{\psi}}^{(\ell)}\big\|_\infty,
    \end{align*}
    where the third line is obtained by~\eqref{eqn:stability_inequalities} and~\eqref{eqn:subdivision_stability}. Repeating this estimation $J-1$ many times starting from $\ell=J$ gives the required.
\end{proof}

Theorem~\ref{thm:stability} guarantees that changes in the detail coefficients yield to proportional errors in synthesis. This fact is useful for many applications since, usually, modifications are applied to the detail coefficients prior to reconstruction.

We eventually note here that, due to Remark~\ref{remark:psi_dichotomy}, the inequalities~\eqref{eqn:stability_inequalities} and Theorem~\ref{thm:stability} are still true in case the analyzed sequence consists of discrete measures. In particular, a detail coefficient in the discrete case can be treated as a function from $\mathbb{R}^d$ to itself, in addition to a coupling matrix. Although the choice of the function is arbitrary, the Lipschitz norm of $\psi=\mu\ominus\nu$ is uniquely determined by restricting the points $x$ and $y$ appearing in~\eqref{eqn:Lipschitz_norm} to the atoms of the source measure $\mu$. As a result, the mathematical developments for the discrete case proceed naturally.

It was shown recently~\cite{nilesweed2022minimax} that the Wasserstein distance $W_p$ can be upper and lower bounded in terms of weighted $\ell_p$ norms of the wavelet coefficients of the density mismatch, effectively characterizing the Wasserstein distance through a Besov norm. This connection suggests that the detail coefficients $\boldsymbol{\psi}^{(\ell)}$ of our multiscale transform, which measure the Wasserstein discrepancy between the analyzed sequence and its geodesic prediction at each scale, may admit an alternative characterization in terms of wavelet coefficients of the involved densities, potentially offering a route to approximating the transform without explicitly solving optimal transport problems at every scale.


\section{Numerical illustrations}\label{sec:numerical_illustrations}

In this section, we present our numerical illustrations covering three types of sequences; we begin with measures that are absolutely continuous, see Section~\ref{sec:Multiscaling_continuous}, and then move to two cases of discrete measures following Section~\ref{sec:multiscaling_discrete}.

\subsection{Curves of Gaussian measures}

Computing the optimal transport plan that minimizes~\eqref{eqn:wasserstein_functional} between two measures in $\mathcal{P}_2(\mathbb{R}^d)$ is typically a difficult task. However, in certain cases, an explicit solution is available. For example, in the one dimensional case $d=1$, the optimal plan becomes a monotone displacement between the distributions of the measures. This is true since the cost function in~\eqref{eqn:wasserstein_functional} is a convex function of the Euclidean distance, see~\cite{villani2021topics}. Another case in which the optimal transport plan takes a closed-form expression for the quadratic cost is the Gaussian case for any $d\geq1$. For simplicity, here we review the results for the one-dimensional case and use them to illustrate the multiscaling of sequences of Gaussian measures, including the application of denoising and anomaly detection via our method. Moreover, we compute the optimality number of some curves of Gaussian measures. All operations required by the multiscale transform can be carried out analytically, rendering the computational cost of these experiments extremely low. We note that all of the computations presented in this subsection can be naturally extended to higher-dimensional Gaussian measures as well as mixed Gaussian distributions, see~\cite[Chapter 1.6.3]{panaretos2020invitation} and~\cite{chen2018optimal}.

Let $\mu_i\sim\mathcal{N}(m_i, \sigma_i)$, $i=0,1$, be two measures with Gaussian distributions on $\mathbb{R}$ with the means $m_i\in\mathbb{R}$ and the variances $\sigma_i>0$, respectively. The Wasserstein distance~\eqref{eqn:Wasserstein_distance} between $\mu_0$ and $\mu_1$ takes the form
\begin{equation}~\label{eqn:Gaussian_Wasserstein_distance}
    W_2^2(\mu_0, \mu_1)=(m_0-m_1)^2 + (\sqrt{\sigma_0}-\sqrt{\sigma_1})^2.
\end{equation}
In particular, the optimal transport map $T_{\mu_0}^{\mu_1}$ that pushes $\mu_0$ onto $\mu_1$ is the affine map
\begin{equation}~\label{eqn:Gaussian_optimal_transport}
    T_{\mu_0}^{\mu_1}(x)=m_1+\sqrt{\frac{\sigma_1}{\sigma_0}}\big(x-m_0\big), \quad x\in\mathbb{R},
\end{equation}
where the optimal transport plan $(I,T_{\mu_0}^{\mu_1})_{\#}\mu_0$ is supported on the set $\{(x,T_{\mu_0}^{\mu_1}(x))\;|\;x\in\mathbb{R}\}$ which constitute an affine subspace of $\mathbb{R}^2$. The multivariate version of these results have been known since~\cite{dowson1982frechet}.

The difference operator $\ominus$ of~\eqref{eqn:ominus} in this case is the affine map
\begin{equation}~\label{eqn:Gaussian_ominus}
    (\mu_1 \ominus\mu_0)(x) = m_1+\sqrt{\frac{\sigma_1}{\sigma_0}}\big(x-m_0\big)-x, \quad x\in\mathbb{R}.
\end{equation}
If we denote the result $\psi(x)=(\mu_1\ominus\mu_0)(x)$, then the addition operator $\oplus$ of~\eqref{eqn:oplus} applied to $\mu_0$ and $\psi$ recovers the Gaussian measure $\mu_1$, that is, $\mu_0\oplus \psi=\mu_1$. Therefore, the operator $\oplus$ can be expressed in a simple closed form by inverting the affine map~\eqref{eqn:Gaussian_ominus}. For this, a system of two equations with two variables (mean and variance) with a unique solution is solved. This solution is as follows. Given $\mu_0\sim\mathcal{N}(m_0,\sigma_0)$ and an affine map $\psi(x)=Ax+B$, the measure $\mu_1=\mu_0\oplus \psi$ is Gaussian and determined by the parameters
\begin{equation}
    \mu_0\oplus \psi\sim\mathcal{N}\big(B+m_0(A+1),\;\sigma_0(A+1)^2\big).
\end{equation}
Overall, the two operators are well defined and compatible~\eqref{eqn:operators_compatibility} for any Gaussian measures.

An element of the geodesic $\{\mu_t\}$ that connects $\mu_0$ with $\mu_1$ and parametrized with $t\in[0, 1]$ is given by $\mu_t\sim\mathcal{N}(m_t, \sigma_t)$, where
\begin{equation}~\label{eqn:Gaussian_geodesic}
        m_t = (1-t)m_0+tm_1 \quad \text{and} \quad \sigma_t = \big(1 + t\big(\sqrt{\frac{\sigma_1}{\sigma_0}}-1\big)\big)^2\sigma_0.
\end{equation}
The mean of $\mu_t$ is the weighted average between $m_0$ and $m_1$, while its standard deviation grows (or shrinks) linearly with the factor $|\sqrt{\sigma_1}-\sqrt{\sigma_0}|$. Note that the geodesic $\{\mu_t\}$ interpolates the points $\mu_0$ and $\mu_1$ for $t=0, 1$, respectively. Furthermore, the measure $\mu_t$ in this case is interpreted as McCann's average~\eqref{eqn:continuous_McCann_average} with weight $t$. That is, $\mu_t=\mathfrak{M}(\mu_0,\mu_1;t)$.

Now that all the ingredients of the elementary multiscale trasform~\eqref{eqn:elementary_multiscaling} are available, we illustrate a pyramid representation of a Gaussian measure curve. To this end, we consider the two probability measures $\mu_0\sim\mathcal{N}(0, 1.884)$ and $\mu_1\sim\mathcal{N}(1,0.1084)$, and a synthetically-generated curve connecting them of which we denote by $\{\widehat{\mu}_t\}$, $t\in[0, 1]$. The parameters of $\{\widehat{\mu}_t\}$ vary smoothly with respect to $t$. Moreover, the measures in the vicinity of $t=0.5$ have relatively high variances. This was done to create a discrepancy between $\{\widehat{\mu}_t\}$ and the geodesic $\{{\mu}_t\}$ that inherently encodes the optimal transport between the endpoint measures.

Figure~\ref{fig:smooth_gaussian_curve_pyramid} illustrates the curve $\{\widehat{\mu}_t\}$ together with its pyramid representation on $4$ scales. The maximal norm of the detail coefficients $\boldsymbol{\psi}^{(\ell)}$ generated by~\eqref{eqn:elementary_multiscaling} decay very fast. This indicates the smoothness of the curve $\{\widehat{\mu}_t\}$ as Theorem~\ref{thm:details_decay} indicates. To further illustrate our multiscaling, we contaminate the curve $\{\widehat{\mu}_t\}$ with noise, both to the means and variances of its elements, that becomes less significant in the neighborhoods of the endpoints of the curve. Figure~\ref{fig:noisy_gaussian_curve_pyramid} shows the noisy curve next to its multiscale representation. This time, because the curve does not vary smoothly, the detail coefficients are large on high scales, and show no clear pattern of geometric decay.

\begin{figure}
\begin{subfigure}{0.46\textwidth}
    \includegraphics[width=\textwidth]{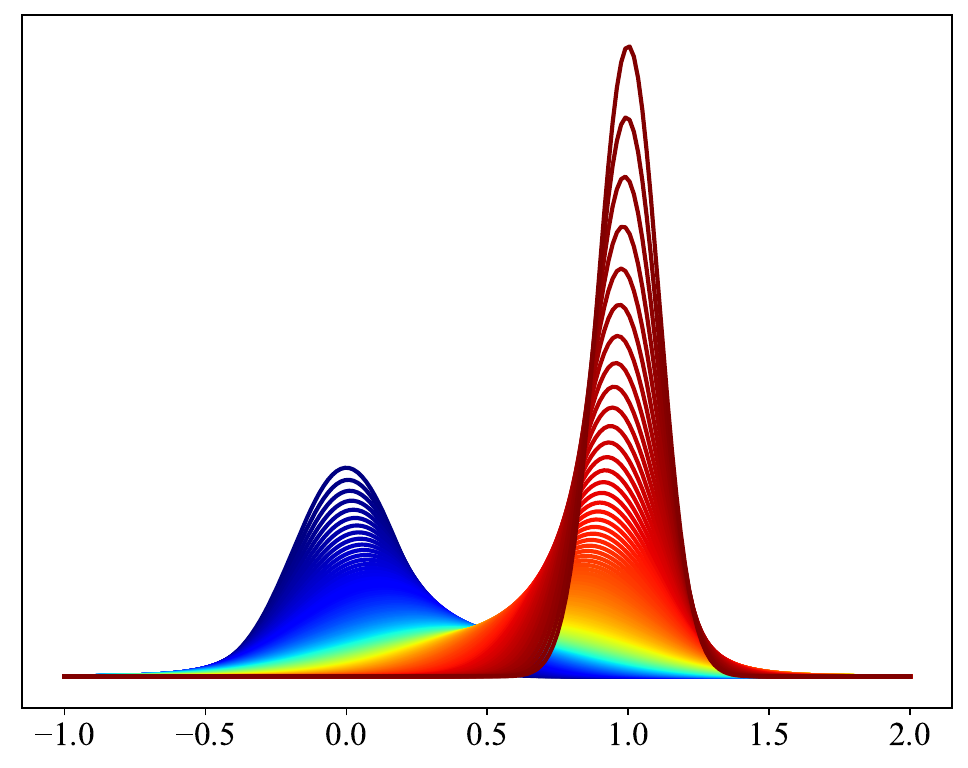}
\end{subfigure}
\hfill
\begin{subfigure}{0.51\textwidth}
    \includegraphics[width=\textwidth]{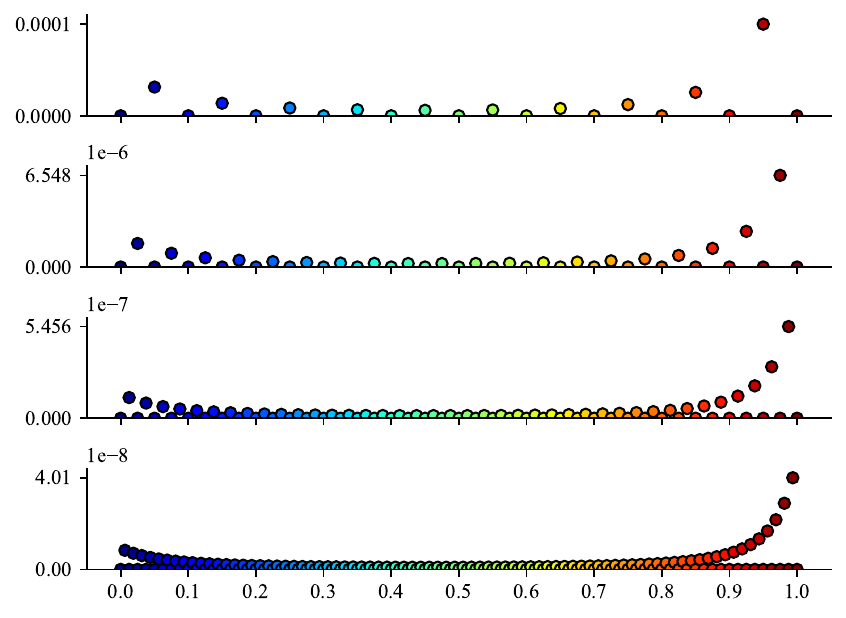}
\end{subfigure}
\caption{Analysis of the smooth Gaussian curve $\{\widehat{\mu}_t\}$. On the left, a curve of Gaussian measures with parameters that vary smoothly. On the right, norms of the detail coefficients $\boldsymbol{\psi}^{(\ell)}$ obtained by the elementary multiscale representation~\eqref{eqn:elementary_multiscaling}, where the top rows correspond to coarse scales and the bottom rows correspond to fine scales.} Note the decay of the maximal norm with each layer of details. The color coding in both figures correspond to each other.
\label{fig:smooth_gaussian_curve_pyramid}
\end{figure}

\begin{figure}
\begin{subfigure}{0.46\textwidth}
    \includegraphics[width=\textwidth]{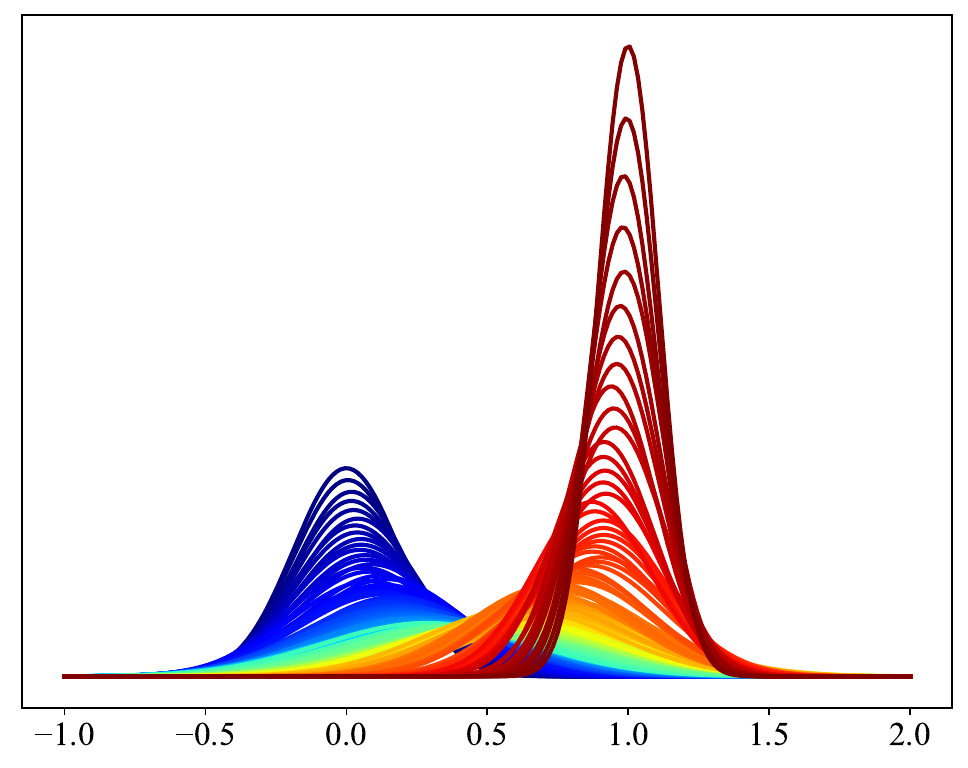}
\end{subfigure}
\hfill
\begin{subfigure}{0.51\textwidth}
    \includegraphics[width=\textwidth]{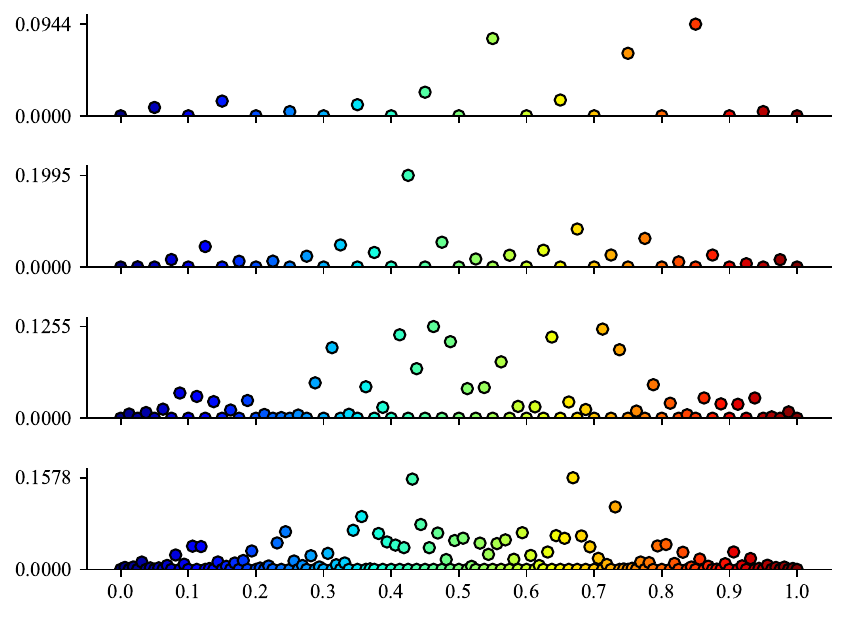}
\end{subfigure}
\caption{Analysis of a noisy Gaussian curve. On the left, the smooth curve of Gaussian measures $\{\widehat{\mu}_t\}$ but with parameters contaminated with noise. On the right, norms of the detail coefficients $\boldsymbol{\psi}^{(\ell)}$ obtained by the elementary multiscale representation~\eqref{eqn:elementary_multiscaling}. The norms show no geometric decay and remain high even at high scales. This indicates the noisy texture of the curve. The color codings in both figures correspond to each other.}
\label{fig:noisy_gaussian_curve_pyramid}
\end{figure}

Representing data on different scales is a powerful tool for applying denoising. We thus proceed and show the effect of denoising via our multiscaling. The application of noise reduction is done particularly by setting to zero detail coefficients with large norms, ones that are above a certain prefixed threshold. Here, by zero we mean the trivial zero map on $\mathbb{R}$. Thresholding the pyramid representation yields a sparser pyramid that can be reconstructed via~\eqref{eqn:elementary_synthesis} to obtain the denoised result. Figure~\ref{fig:denoised_gaussian_curve_pyramid} demonstrates the final result as a proof of concept.

\begin{figure}
\begin{subfigure}{0.46\textwidth}
    \includegraphics[width=\textwidth]{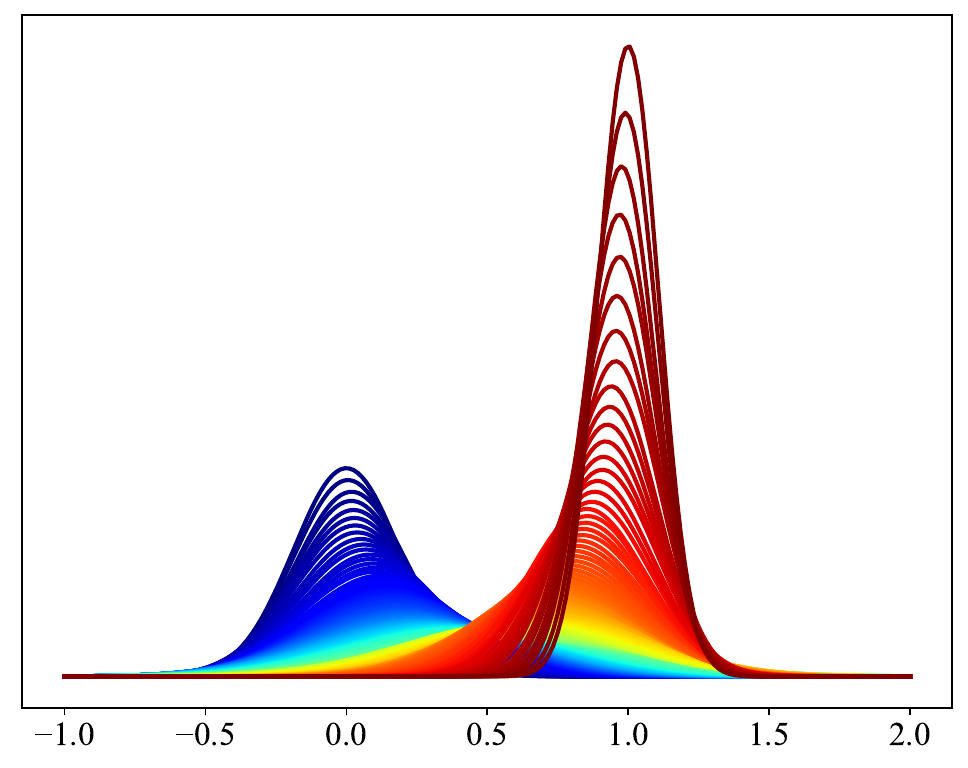}
\end{subfigure}
\hfill
\begin{subfigure}{0.51\textwidth}
    \includegraphics[width=\textwidth]{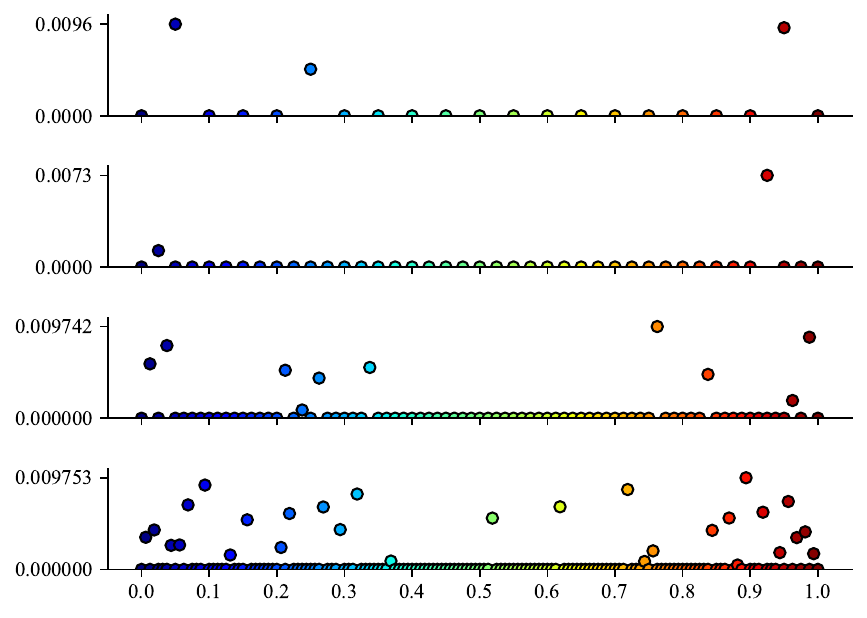}
\end{subfigure}
\caption{Denoised Gaussian curve. On the left, the result of denoising the curve that appears in Figure~\ref{fig:noisy_gaussian_curve_pyramid}, where the ground truth curve $\{\widehat{\mu}_t\}$ appears in Figure~\ref{fig:smooth_gaussian_curve_pyramid}. The denoising was done by thresholding the detail coefficients of the elementary multiscale transform with the threshold $0.01$. On the right, the detail coefficients of the denoised curve. The maximal Wasserstein distance between the original curve $\{\widehat{\mu}_t\}$ and the denoised curve is $0.1888$. Lowering the threshold gives better visual results with smaller empirical errors.}
\label{fig:denoised_gaussian_curve_pyramid}
\end{figure}

Another useful application that can be performed via multiscaling is \emph{anomaly detection}. In particular, this application is done by observing the significance of the details generated by the multiscale transform~\eqref{eqn:elementary_multiscaling}. Abnormalities such as jump discontinuities are detected in locations that correspond to relatively large detail coefficients. To illustrate this, we create two significant jump points in the middle of the smooth curve $\{\widehat{\mu}_t\}$ that appears in Figure~\ref{fig:smooth_gaussian_curve_pyramid}. Specifically, we drastically reduce the variances of the Gaussian measures falling in the middle third of the curve parametrization, hence creating two jump points. Indeed, the locations of these anomalies are revealed by large detail coefficients as Figure~\ref{fig:anomaly_gaussian_curve_pyramid} shows.

\begin{figure}
\begin{subfigure}{0.46\textwidth}
    \includegraphics[width=\textwidth]{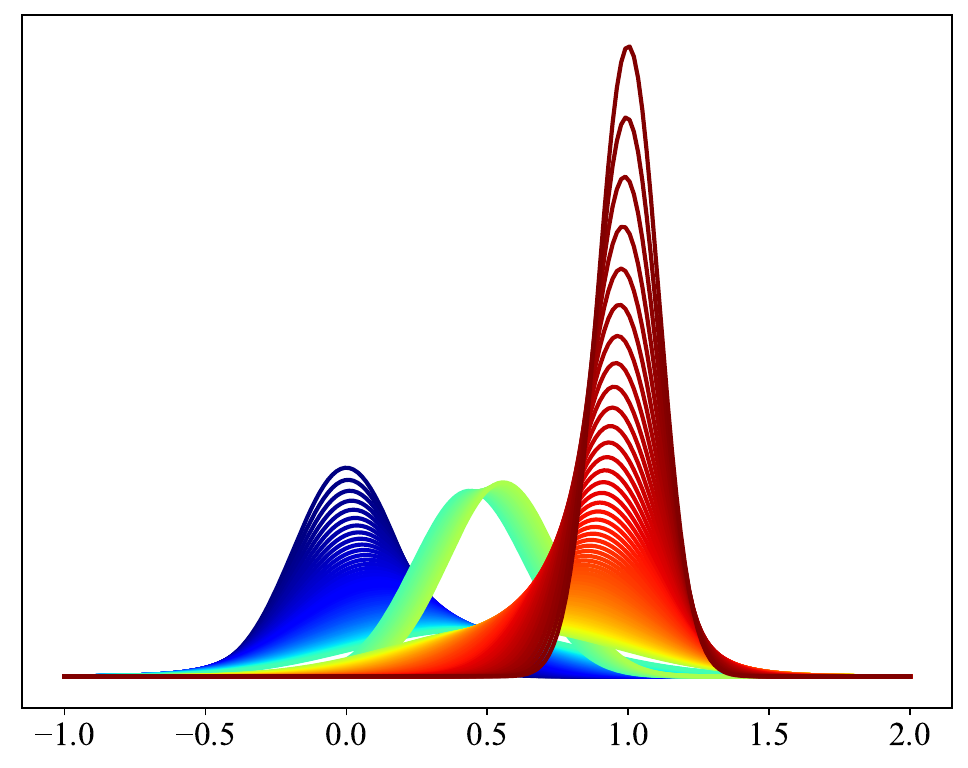}
\end{subfigure}
\hfill
\begin{subfigure}{0.51\textwidth}
    \includegraphics[width=\textwidth]{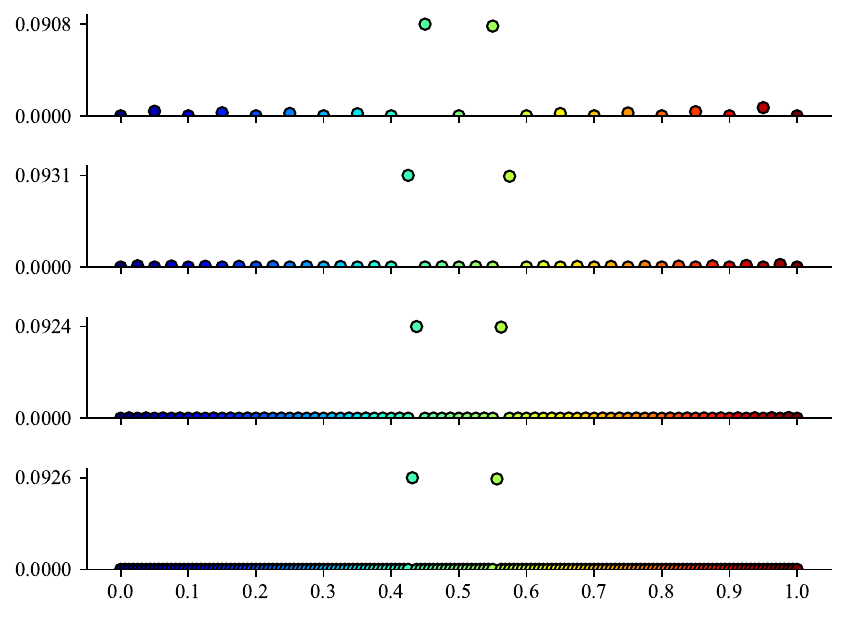}
\end{subfigure}
\caption{Anomaly detection in Gaussian curve. The locations of two jump discontinuities of a Gaussian curve are revealed by the elementary multiscale transform~\eqref{eqn:elementary_multiscaling}.}
\label{fig:anomaly_gaussian_curve_pyramid}
\end{figure}

Finally, we exploit the synthetic curve $\{\widehat{\mu}_t\}$ of Figure~\ref{fig:smooth_gaussian_curve_pyramid} to demonstrate how the optimality number increases as curves deviate from their geodesics in the Wasserstein space. To this purpose, by~\eqref{eqn:Gaussian_geodesic} we calculate the geodesic between the endpoint measures $\mu_0$ and $\mu_1$ that were given earlier in this section. Denote the geodesic by $\{\mu_t\}$, $t\in[0,1]$. Because the geodesic $\{\mu_t\}$ consists of intrinsic optimal transports between any two elements, of any location and scale, the detail coefficients of the elementary multiscale transform~\eqref{eqn:elementary_multiscaling} are all equal to the zero map. Therefore, the optimality number is $0$. i.e., the flow of $\{\mu_t\}$ is as optimal as possible.

Now, we compute the weighted averages between the geodesic $\{\mu_t\}$ and $\{\widehat{\mu}_t\}$ which both connect the initial and the final measures $\mu_0$ and $\mu_1$. Namely, define the family of curves $\mu^{[k]}$ by
\begin{equation}~\label{eqn:Gaussian_weighted_family}
    \mu^{[k]}_t = (1-k)\mu_t+k\widehat{\mu}_t, \quad (k, t)\in[0, 1]^2.
\end{equation}
Figure~\ref{fig:Gaussian_averages} depicts five members of this family, together with the optimality number of each curve. Furthermore, Figure~\ref{fig:Gaussian_averages_decay} shows the maximal norm of each detail layer for the five curves. The geometric decay therein indicates the smoothness of the curves, as pointed out in Theorem~\ref{thm:details_decay}.

\begin{figure}
\begin{subfigure}{0.19\textwidth}
    \includegraphics[width=\textwidth]{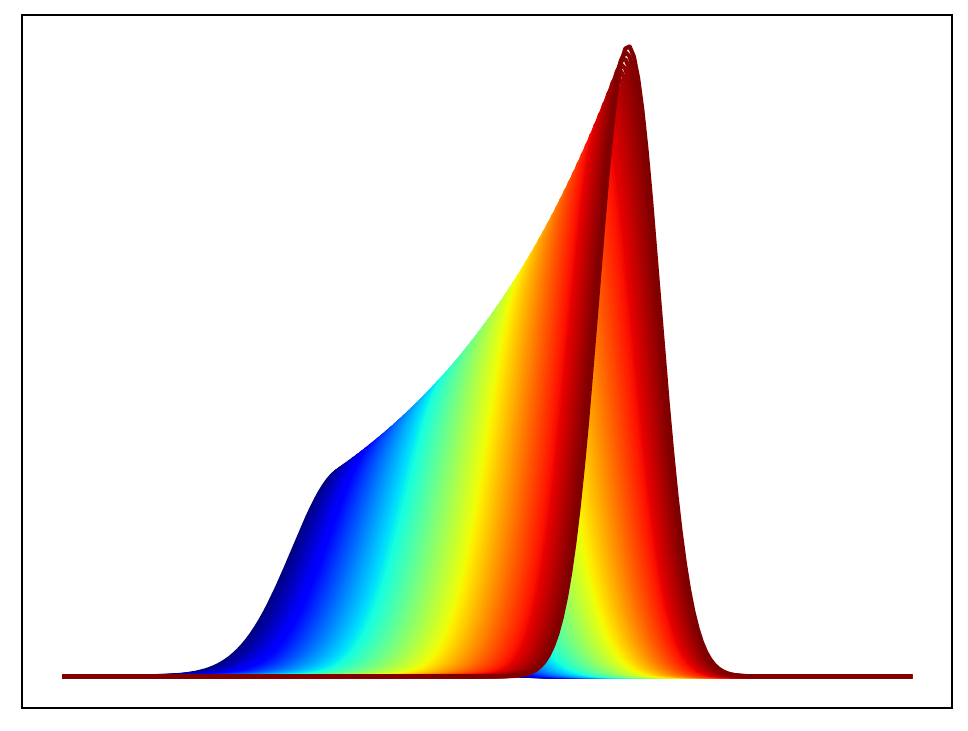}
    \caption*{$k=0$ \\ $\omega=0$}
\end{subfigure}
\hfill
\begin{subfigure}{0.19\textwidth}
    \includegraphics[width=\textwidth]{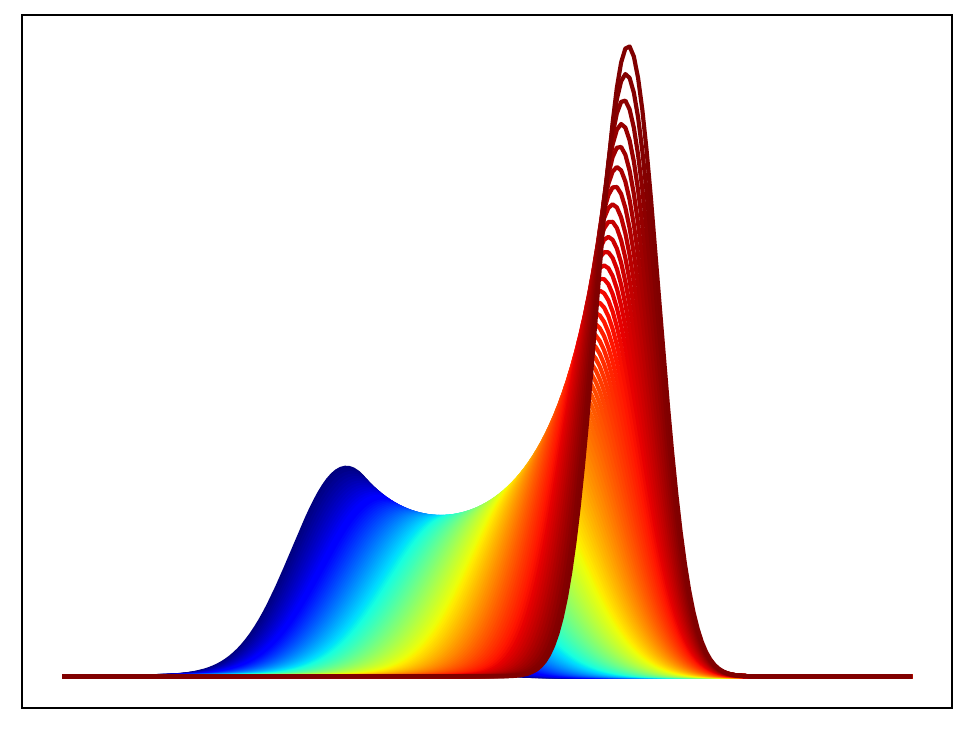}
    \caption*{$k=0.25$ \\ $\omega=0.5013$}
\end{subfigure}
\hfill
\begin{subfigure}{0.19\textwidth}
    \includegraphics[width=\textwidth]{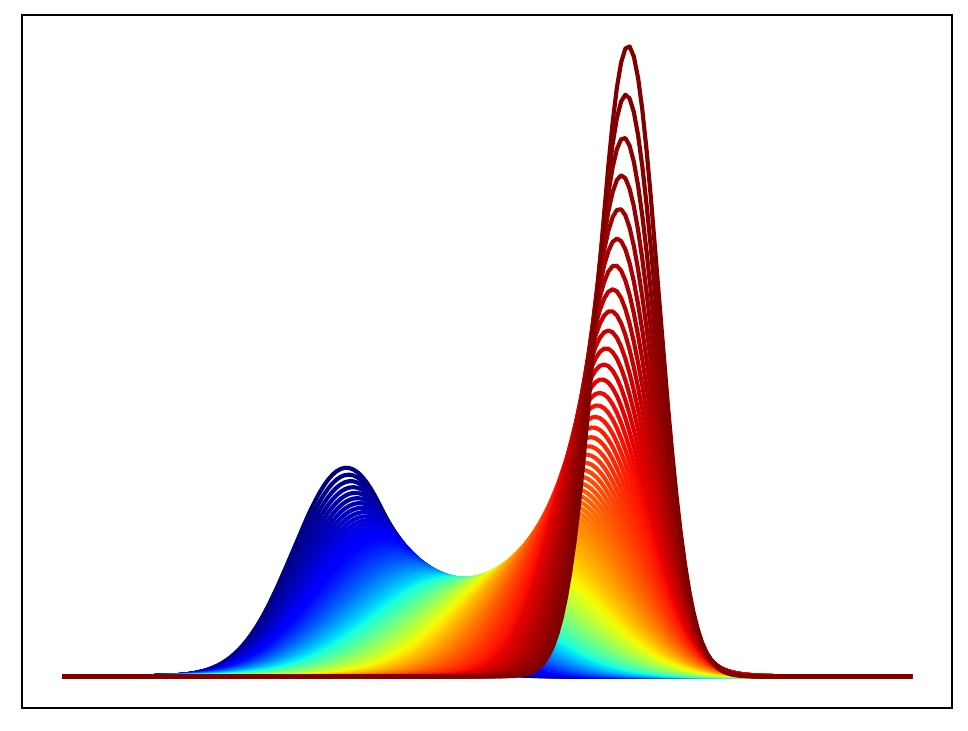}
    \caption*{$k=0.5$ \\ $\omega=0.9934$}
\end{subfigure}
\hfill
\begin{subfigure}{0.19\textwidth}
    \includegraphics[width=\textwidth]{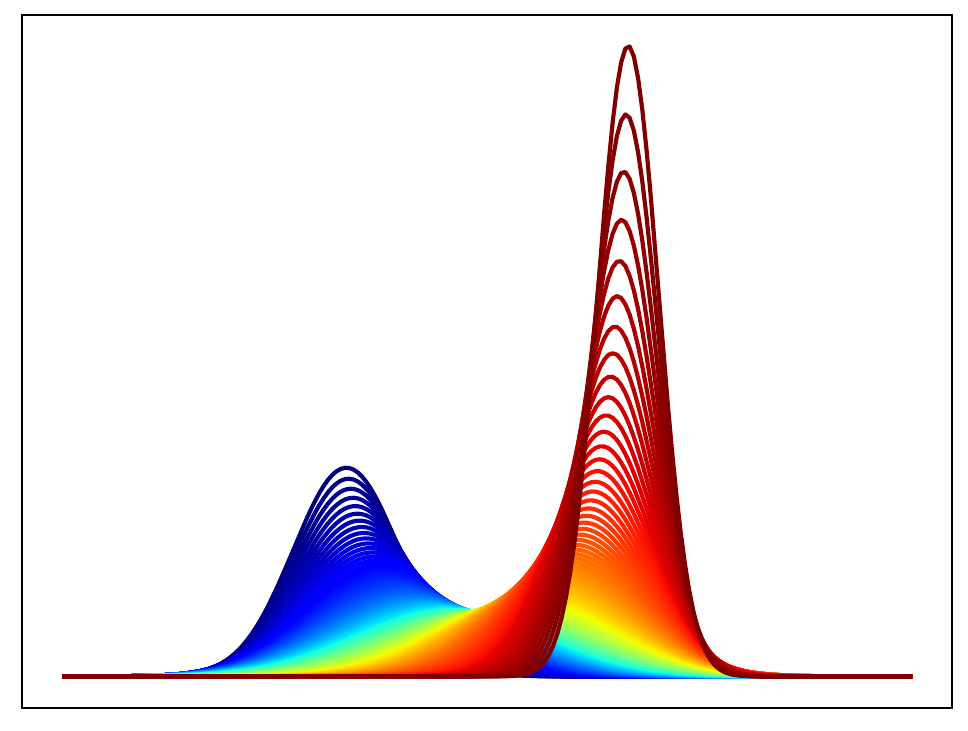}
    \caption*{$k=0.75$ \\ $\omega=1.4762$}
\end{subfigure}
\hfill
\begin{subfigure}{0.19\textwidth}
    \includegraphics[width=\textwidth]{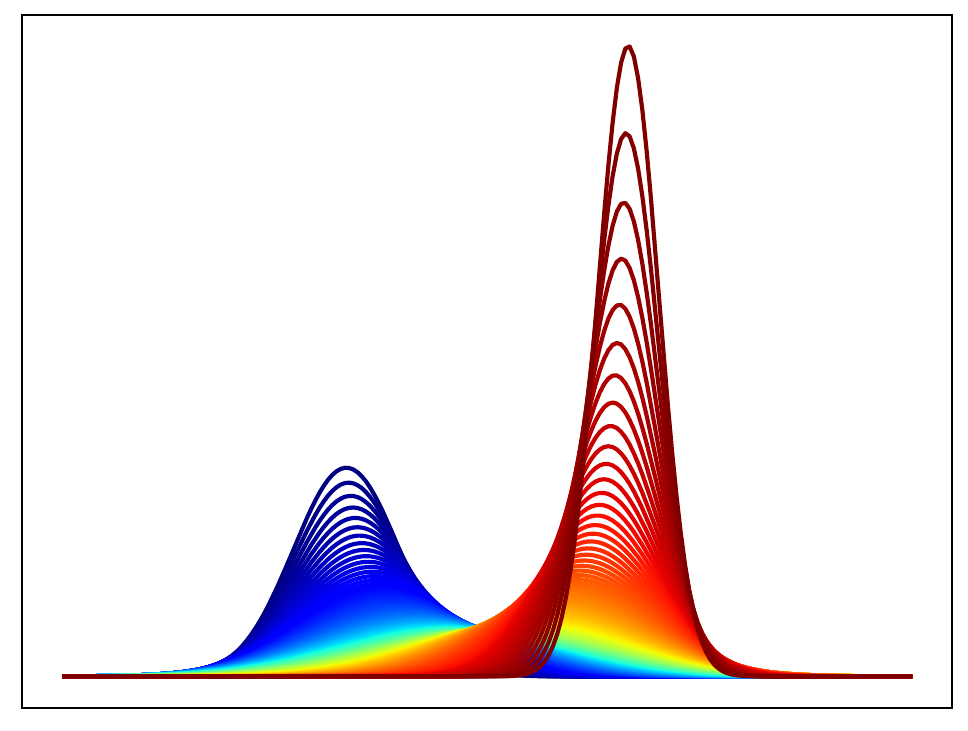}
    \caption*{$k=1$ \\ $\omega=1.9503$}
\end{subfigure}
\caption{Weighted averages between a curve connecting two measures and their geodesic. Five members of the family $\mu^{[k]}_t$ of~\eqref{eqn:Gaussian_weighted_family} are illustrated with their respective optimality numbers.}
\label{fig:Gaussian_averages}
\end{figure}

\begin{figure}[H]
    \centering
    \includegraphics[width=0.5\linewidth]{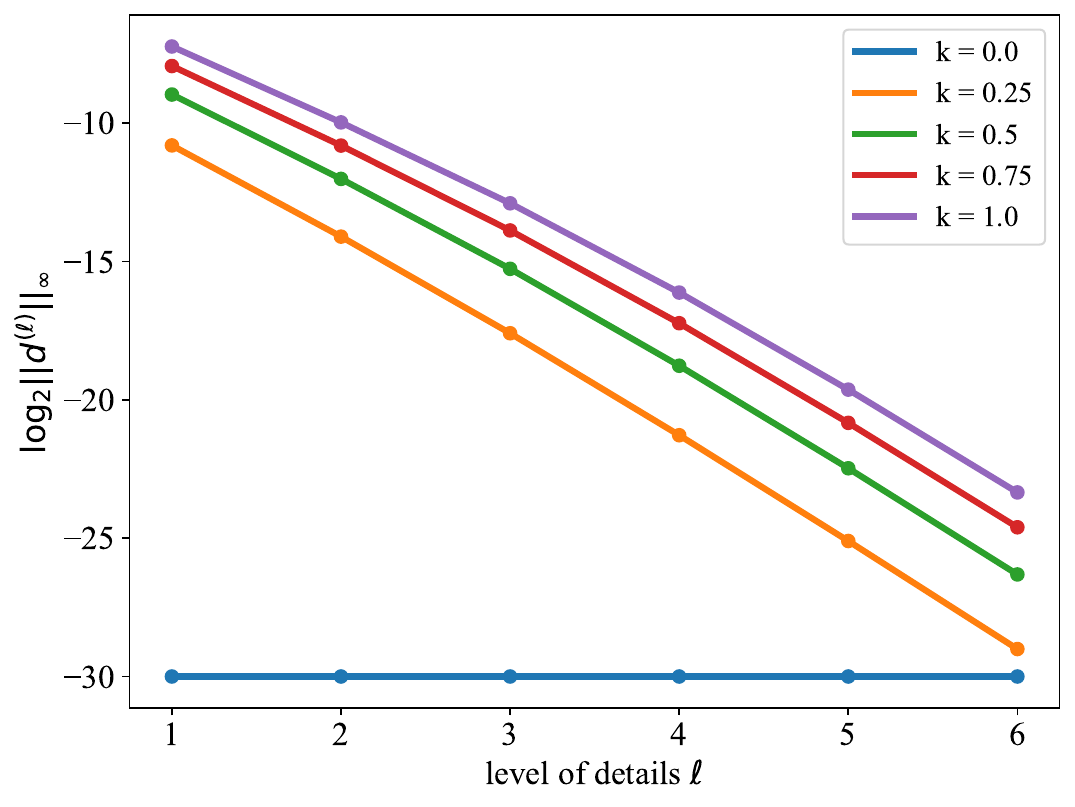}
    \caption{Maximal error against different detail layers $\ell$ on the logarithmic scale. The geometric decay of the maximal norm of the detail coefficient of five members of the family $\mu^{[k]}_t$ of~\eqref{eqn:Gaussian_weighted_family}.}
    \label{fig:Gaussian_averages_decay}
\end{figure}

\subsection{Curves of point clouds}

In this section, we demonstrate the elementary multiscale transform for discrete measures with free support on an example from physics. Sequences in this subsection form a point cloud that evolves with time according to a vector field. From a computational standpoint, each iteration of the multiscale decomposition~\eqref{eqn:elementary_multiscaling} requires solving several Kantorovich problems, each of which is a linear program whose complexity depends on the number of atoms of the involved measures. In the experiments of this section and the following one, all measures are supported on $10$ atoms, making each linear program small and the overall transform computationally inexpensive.

The electric field $v:\mathbb{R}^2\rightarrow\mathbb{R}^2$ induced by a positive charge $+q$ located at $(-1, 0)$ and a negative charge $-q$ located at $(1, 0)$ is given by Coulomb's law as
\begin{equation}~\label{eqn:electric_field}
    v(x,y) = \frac{1}{r_+^{3/2}}
    \begin{pmatrix}
        x+1 \\
        y
    \end{pmatrix}
    -\frac{1}{r_-^{3/2}}
    \begin{pmatrix}
        x-1 \\
        y
    \end{pmatrix}, \quad (x,y)\in\mathbb{R}^2,
\end{equation}
up to a constant depending on $q$ which we treat as $1$ for convenience, where $r_\pm=(x\pm1)^2+y^2$. Straightforward calculations of the Euclidean norm of $v(x,y)$ yield
\begin{align}~\label{eqn:electric_norm}
    \|v(x,y)\|^2 = \frac{\bigg[ y^2 \left(r_+ ^{3/2} - r_-^{3/2} \right)\bigg]^2+\bigg[(x - 1)r_+^{3/2} - (x + 1)r_-^{3/2}\bigg]^2
}{r_+^3 r_-^3},
\end{align}
which tends to $\infty$ as $(x,y)\rightarrow(\pm1,0)$. In other words, a particle beginning its trajectory from a point close to, say, the positive charge at $(-1, 0)$, would be pushed farther from the charge within a short fixed time interval. The closer the particle, the farther its location is by the next timestep. In contrast, particles moving along the field~\eqref{eqn:electric_field} in a large Euclidean distance from the origin would be less affected by the charges since $\|v(x,y)\|$ of~\eqref{eqn:electric_norm} tends to $0$ as $\|(x, y)\|\rightarrow\infty$.

We study the evolution of a point cloud in $\mathbb{R}^2$ along the field~\eqref{eqn:electric_field} with respect to time. Sequences in the Wasserstein space $\mathcal{P}_2(\mathbb{R}^2)$ in this setting would be samples of curves where each element encodes a finite set of distinct points. These curves, together with the field~\eqref{eqn:electric_field}, must satisfy the continuity equation~\eqref{eqn:continuity_equation}. To make this problem suitable with the free support measures from the optimal transport theory, we assume that the probability distribution on each cloud is uniform, and is time-invariant across the sequence.

We conduct and simulate two experiments. We first generate $10$ random points in the neighborhood of the point $(-2.5,1)\in\mathbb{R}^2$. Each generated point represents a particle. We track the trajectories of the particles along the electric field~\eqref{eqn:electric_field} with the prefixed timestep $0.15$. The final sequence of interest in the Wasserstein space $\mathcal{P}_2(\mathbb{R}^2)$ consists of $641$ discrete measures that is sampled from a geodesic. We decompose the resulting sequence with $6$ iterations of~\eqref{eqn:elementary_multiscaling}, leaving only $11$ points in the coarse approximation. Under these circumstances and parameters, some particles begin their movement near the positive charge. Hence, the detail coefficients of the sequence generated by the elementary multiscale transform would have relatively large values around their first timestep. This is indeed the case as Figure~\ref{fig:electric_pyramid_1} shows. Moreover, the decay in the detail coefficients across scales is explained through Theorem~\ref{thm:details_decay}.

\begin{figure}
\begin{subfigure}{0.38\textwidth}
    \includegraphics[width=\textwidth]{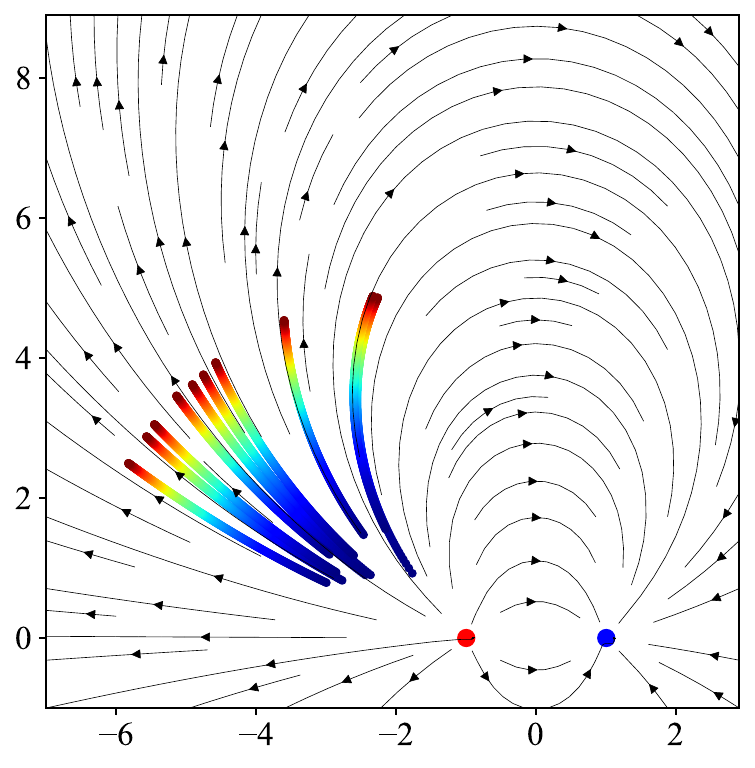}
\end{subfigure}
\begin{subfigure}{0.51\textwidth}
    \includegraphics[width=\textwidth]{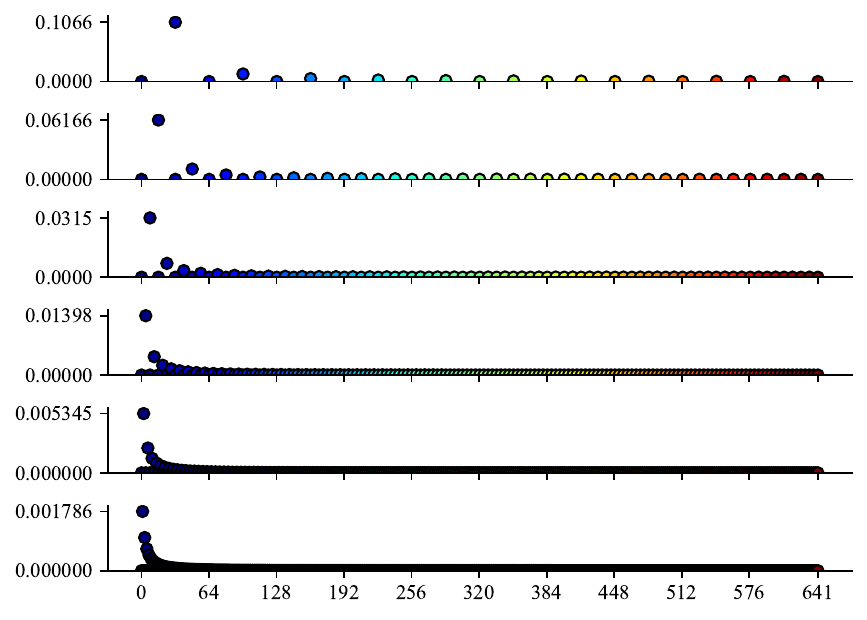}
\end{subfigure}
\caption{Multiscaling a geodesic of discrete measures in $\mathcal{P}_2(\mathbb{R}^2)$. On the left, the trajectories of the 10 particles along the electric field~\eqref{eqn:electric_field}. On the right, norms of the $6$ layers of detail coefficients obtained by the multiscaling~\eqref{eqn:elementary_multiscaling} of the geodesic. Because some particles began their movement near the positive charge, the detail norms are salient on the left endpoint of the pyramid representation. The optimality number of the geodesic is $\omega = 0.3209$.}
\label{fig:electric_pyramid_1}
\end{figure}

Theoretically, the optimality number~\eqref{eqn:optimality_number} of the geodesic appearing in Figure~\ref{fig:electric_pyramid_1} ought to be zero because the analyzed curve follows the vector field and makes a geodesic in the space. However, due to numerical errors and the finiteness of the timestep, the optimality is positive and small. If we consider a point cloud that evolves farther from the charges, we get a lower optimality number.

Next, we contaminate the geodesic appearing in Figure~\ref{fig:electric_pyramid_1} with noise and test the multiscale transform of the resulting path. The noise is added to the atoms of the measures in the following sense. We start with the same $10$ points as before, but now, with every timestep, we calculate the vector field~\eqref{eqn:electric_field} and add to its two coordinates a noise that is normally distributed with $0$ mean and $0.1$ variance. Thanks to the additive noise, the sequence of measures now deviates from the original geodesic. This is manifested in large detail coefficients in the multiscale transform, which appears in Figure~\ref{fig:electric_pyramid_2} alongside the sequence itself. In contrast to Figure~\ref{fig:electric_pyramid_1}, note that there is no decay in the maximal detail coefficient across scales, this phenomenon further aligns with Lemma~\ref{lem:details_delta_bound} and Theorem~\ref{thm:details_decay}.

\begin{figure}[H]
\begin{subfigure}{0.38\textwidth}
    \includegraphics[width=\textwidth]{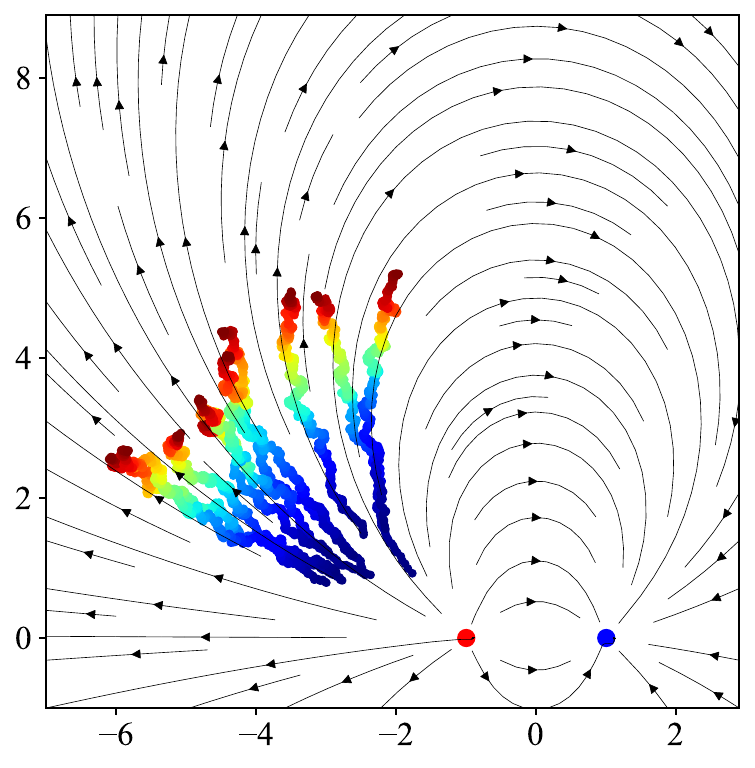}
\end{subfigure}
\begin{subfigure}{0.51\textwidth}
    \includegraphics[width=\textwidth]{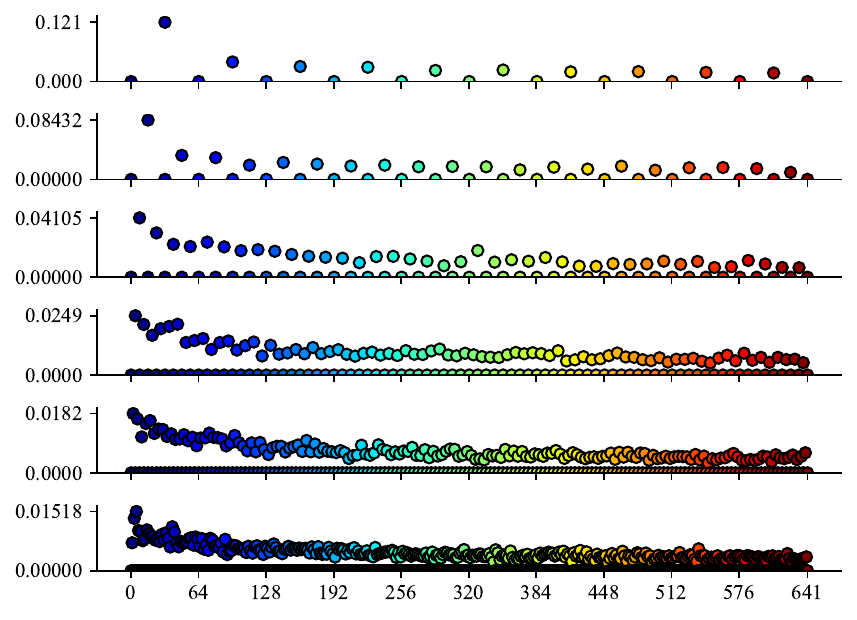}
\end{subfigure}
\caption{Multiscaling a noisy sequence of discrete measures in $\mathcal{P}_2(\mathbb{R}^2)$. On the left, the trajectories of the $10$ particles along the electric field~\eqref{eqn:electric_field}. On the right, norms of the $6$ layers of detail coefficients obtained by the multiscaling~\eqref{eqn:elementary_multiscaling} of the clouds. Because all particles are pushed farther from both charges, the detail coefficients exhibit geometric decay along the time axis. Due to the added noise, the maximal norm does not show a clear decay pattern. The optimality number of the analyzed sequence is $\omega = 4.6722$.}
\label{fig:electric_pyramid_2}
\end{figure}

The coarse approximations of the two sequences are shown in Figure~\ref{fig:electro_coarse}.

\begin{figure}[H]
\begin{subfigure}{0.35\textwidth}
    \includegraphics[width=\textwidth]{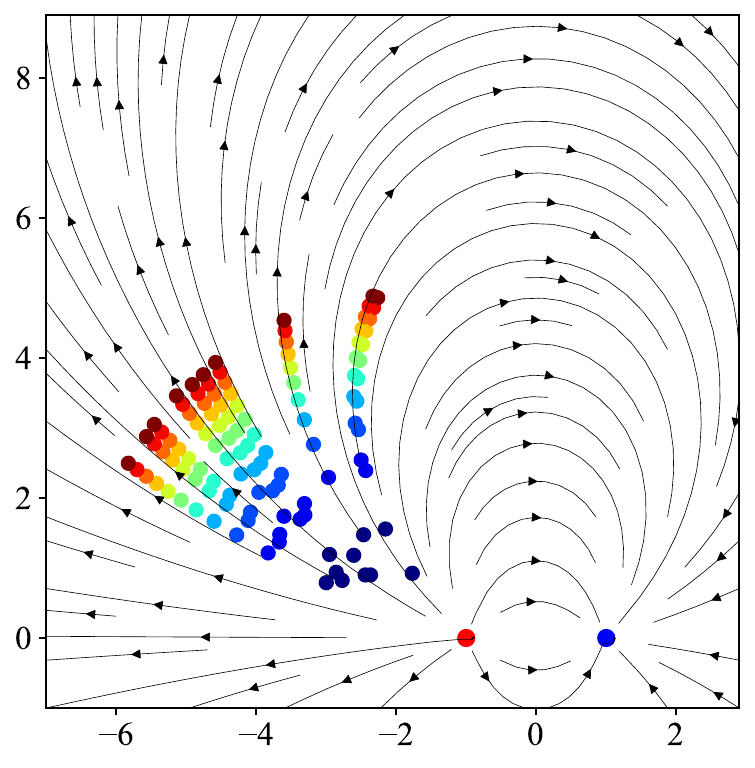}
\end{subfigure}
\begin{subfigure}{0.35\textwidth}
    \includegraphics[width=\textwidth]{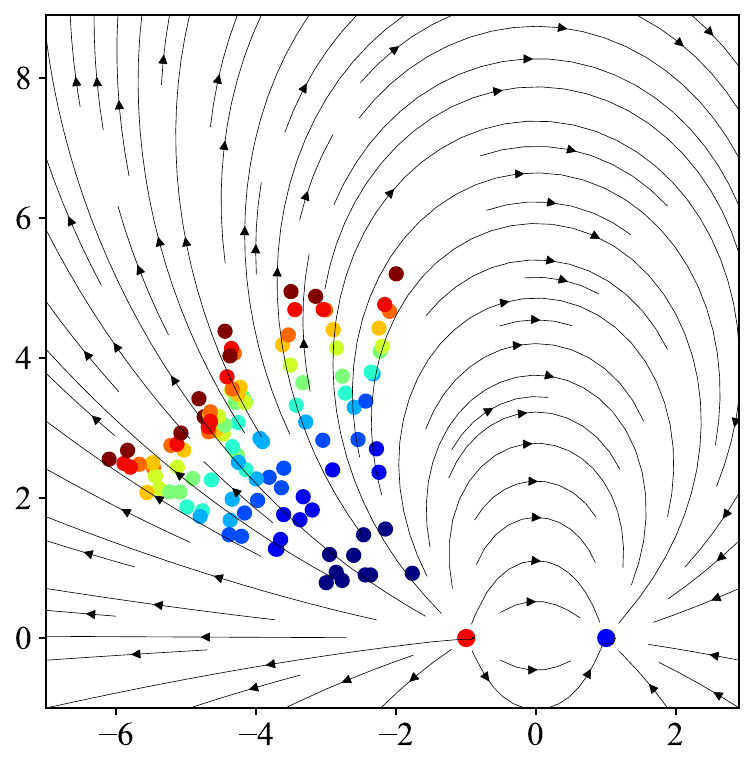}
\end{subfigure}
\caption{The coarse approximations of the curves appearing in Figures~\ref{fig:electric_pyramid_1} and~\ref{fig:electric_pyramid_2}. $11$ point clouds each consisting of $10$ atoms with uniform distribution.}
\label{fig:electro_coarse}
\end{figure}

The takeaway message of the two experiments presented in this section is as follows. The elementary multiscale transform~\eqref{eqn:elementary_multiscaling} can be used to study how smooth point clouds evolve over time. In particular, the faster the detail coefficients decay in scale, the smoother the flow of measures. Furthermore, locations where the sequence is affected by large vector fields, as seen in the continuity equation~\eqref{eqn:continuity_equation}, can be detected by large norms in the pyramid representation. Both insights are fully explained by our theoretical results presented in Section~\ref{sec:theoretical_results}.

\subsection{Learning dynamics of neural networks}

Our last numerical illustration is inspired by the deep learning theory. In this experiment, our objective is to show that our multiscale transform~\eqref{eqn:elementary_multiscaling} can be used to analyze different deep learning models and optimization methods. To this end, we track and study a sequence of discrete probability measures obtained by a deep neural network that solves a specific task.

We consider a convolutional neural network with $2346$ trainable weights for classifying the MNIST dataset~\cite{deng2012mnist}. The output layer consists of $10$ neurons that represent, due to the softmax activation function, a probability distribution over the class of digits $\{0,\dots,9\}$. We compile the neural network with the Adam optimizer, with the low learning rate $10^{-5}$, and the categorical cross-entropy loss function. While training the neural network on the training dataset, we calculate, at the end of each epoch, the mean probability distribution across all images with a given digit.

Due to the simplicity of the task, the learning rate is deliberately chosen to be small so that we can increase the number of epochs and thus create a sequence of discrete probability measures corresponding to a fine-grid parametrization. In our case, we consider $161$ epochs with batches of $128$. Furthermore, because all output measures share the same support, this makes the sequence analyzable by the operators presented in Section~\ref{sec:multiscaling_discrete}.

The following figure demonstrates the output mean probabilities for predicting the digit ``$3$'' on a heat map over the advancement of the epoch iterations. Next to the heat map is the multiscale representation of the measure sequence obtained by~\eqref{eqn:elementary_multiscaling}.

\begin{figure}[H]
\begin{subfigure}{0.46\textwidth}
    \includegraphics[width=\textwidth]{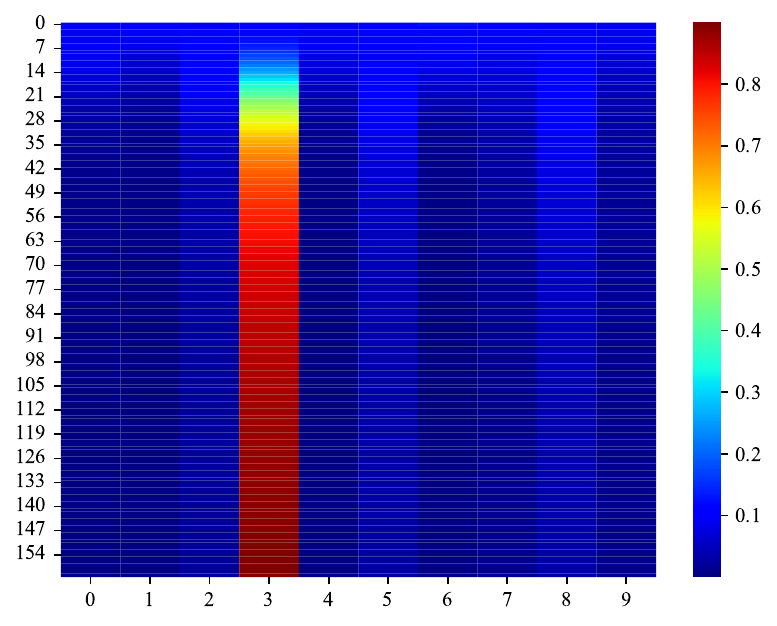}
\end{subfigure}
\hfill
\begin{subfigure}{0.51\textwidth}
    \includegraphics[width=\textwidth]{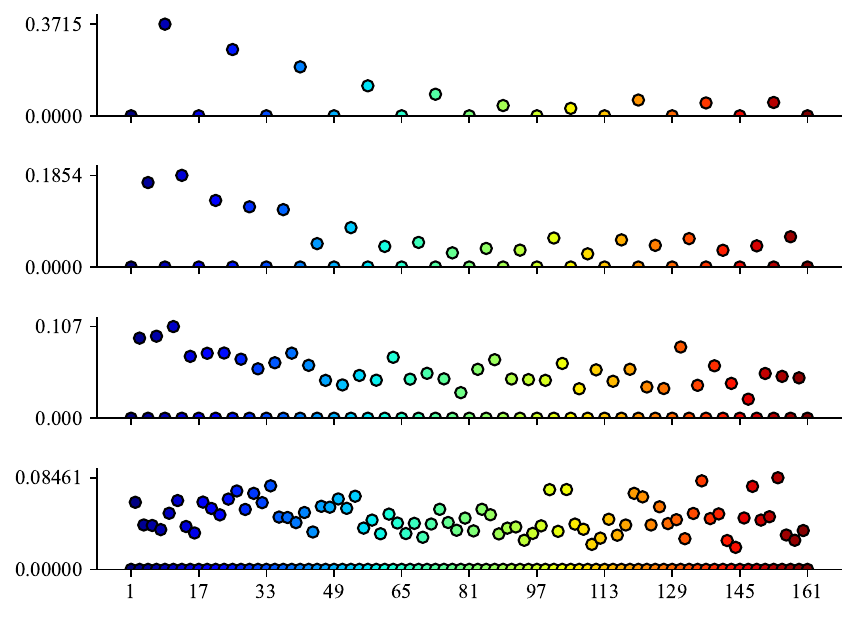}
\end{subfigure}
\caption{Analysis of the learning dynamics of a neural network. The heat map on the left depicts the mean probability of predicting the digit ``$3$'' by the end of each epoch, over 161 epochs. On the right, the norms of the detail coefficients~\eqref{eqn:elementary_multiscaling} over $4$ layers. In the early stages of learning, the distribution is more or less uniform, and as learning advances, the distribution converges to Dirac's measure over the specified digit. The convergence is apparent on the coarse scales of the details. The decay in the largest detail coefficient over scales indicates that the learning dynamics in $\mathcal{P}_2(\mathbb{R})$ are smooth.}
\label{fig:neural_network_pyramid}
\end{figure}

As guaranteed by Theorem~\ref{thm:details_decay}, because the largest detail coefficient of the analyzed sequence decays over scales (see Figure~\ref{fig:neural_network_pyramid}), we conclude that the dynamics of the neural network's weights follow a smooth path in a $2346$-dimensional space. Furthermore, the measures converge to the Dirac measure at the digit ``$3$'' as learning progresses. This convergence is clear on the coarsest scale as the pyramid in Figure~\ref{fig:neural_network_pyramid} shows. In contrast, details on high scales do not exhibit an organized structure; these fluctuations are directly affected by the stochasticity involved in the optimization method.



The optimality number~\eqref{eqn:optimality_number} of the measure-valued sequence in Figure~\ref{fig:neural_network_pyramid} is $\omega = 8.788$; however, this value is most informative when compared against other sequences, such as those produced by different optimizers, learning rates, or network architectures, rather than interpreted in isolation.

Tailored to the nature of this experiment, the optimality number $\omega$ can be adjusted to capture different aspects of the learning dynamics, yielding a more informative value based on the desired parameters. For example, optimality can be calculated with respect to different batch sizes, learning rates, optimization methods, and network sizes. Moreover, the formula for optimality can include greater penalization for the early stages of learning, placing more emphasis on time regions where learning exhibits more dynamics, i.e., larger changes in weights with respect to the Wasserstein metric.

As a concluding remark, the experiment presented in this section hints at a broader potential of the multiscale framework in the machine learning community. In the context of flow-based generative models and flow matching methods~\cite{lipman2022flow}, a parametric velocity field $v_t$ is trained to generate a probability path $\mu_t$ connecting a latent distribution $\mu_0$ to a target data distribution $\mu_1$. The optimality number $\omega$ provides a principled scalar diagnostic for assessing how geodesically optimal such a path is in the Wasserstein space: a well-trained model approximating the optimal transport map would be expected to yield a low optimality number, while an inefficient or poorly trained model would yield a higher value. In this sense, the optimality number $\omega$ could serve as a model-agnostic evaluation metric for generative models, complementing existing metrics such as the Fr\'echet Inception Distance (FID)~\cite{heusel2017gans}.

\section*{Funding}

W. Mattar is partially supported by the Nehemia Levtzion Scholarship for Outstanding Doctoral Students from the Periphery (2023) and the DFG award 514588180. N. Sharon is partially supported by the NSF-BSF award 2024791, the BSF award 2024266, and the DFG award 514588180.

\bibliographystyle{plain}
\bibliography{references}

\section*{Appendix}

We prove the inequalities~\eqref{eqn:stability_inequalities}. Let $\mu,\nu\in\mathcal{P}_p(\mathbb{R}^d)$ for some $p>1$ and $\psi, \widetilde{\psi}:\mathbb{R}^d\to\mathbb{R}^d$ be two measurable Lipschitz maps. We have
\begin{equation*}
    W_p(\psi_\#\mu,\widetilde{\psi}_\#\mu) \leq \|\psi-\widetilde{\psi}\|_{L^p(\mu)} \quad \text{and} \quad W_p(\psi_\#\mu,\psi_\#\nu) \leq \|\psi\|_{\text{Lip}}W_p(\mu, \nu),
\end{equation*}
where $W_p$ is the Wasserstein distance~\eqref{eqn:Wasserstein_distance} and $\|\psi\|_{\text{Lip}}$ is the Lipschitz constant~\eqref{eqn:Lipschitz_norm} of $\psi$.

\begin{proof}
    For the first inequality, define $F:\mathbb{R}^d\to\mathbb{R}^d\times\mathbb{R}^d$ by $F(t)=(\psi(t), \widetilde{\psi}(t))$. The pushforward of $\mu$ via $F$ defines a measure $F_\#\mu$ over $\mathbb{R}^d\times\mathbb{R}^d$. Because $W^p_p$ of the left hand side is obtained by an optimal transport plan in $\Pi(\psi_\#\mu, \widetilde{\psi}_\#\mu)$, and since $F_\#\mu$ is in fact a transport plan, by change of variables we get
    \begin{equation*}
        W_p^p(\psi_\#\mu,\widetilde{\psi}_\#\mu)\leq \int_{\mathbb{R}^d\times \mathbb{R}^d} \|x-y\|^p d(F_\#\mu)(x,y) \leq \int_{\mathbb{R}^d} \|\psi(t)-\widetilde{\psi}(t)\|^pd\mu(t) = \|\psi-\widetilde{\psi}\|^p_{L^p(\mu)}.
    \end{equation*}

    Now, to prove the second inequality, we consider an optimal transport plan $\gamma\in\Pi(\mu,\nu)$. With such a measure we have $W^p_p(\mu,\nu)=\mathcal{J}_p(\gamma)$ where $\mathcal{J}_p$ is the functional~\eqref{eqn:wasserstein_functional}. Consider the mapping $H:\mathbb{R}^d\times \mathbb{R}^d\to\mathbb{R}^d\times \mathbb{R}^d$ given by $H(s,t)=(\psi(s), \psi(t))$. The pushforward of $\gamma$ via $H$ defines a transport plan $H_\#\gamma$ in $\Pi(\psi_\#\mu, \psi_\#\nu)$. Therefore,
    \begin{align*}
        W^p_p(\psi_\#\mu,\psi_\#\nu)& \leq\int_{\mathbb{R}^d\times \mathbb{R}^d}\|x-y\|^pd(H_\#\gamma)(x,y) \\
        & \leq \int_{\mathbb{R}^d\times \mathbb{R}^d}\|\psi(s)-\psi(t)\|^pd\gamma(s,t) \\
        & \leq \int_{\mathbb{R}^d\times \mathbb{R}^d} \|\psi\|^p_{\text{Lip}}\|s-t\|^pd\gamma(s,t) = \|\psi\|^p_{\text{Lip}}W^p_p(\mu,\nu).
    \end{align*}
\end{proof}
\end{document}